\newtheorem{theorem}{Theorem}
\newtheorem{lemma}{Lemma}
\newtheorem{conjecture}{Conjecture}
\newtheorem{corollary}{Corollary}
\theoremstyle{definition}
\newtheorem{definition}{Definition}
\newtheorem{example}{Example}
\renewcommand\labelenumi{(\roman{enumi})}
\renewcommand\theenumi\labelenumi
\newcommand{\Qset}{\normalfont \textbf{Q}_{3x+1}}
\begin{document}

\title{Parity sequences of the 3x+1 map on the 2-adic integers and Euclidean embedding}
\author{Olivier Rozier}
\date{}

\maketitle

\begin{abstract}
In this paper, we consider the one-to-one correspondence between a 2-adic integer and its parity sequence under iteration of the so-called ``$3x+1$'' map. First, we prove a new formula for the inverse transform. Next, we briefly review what is known about the induced automorphism and study its dynamics on the 2-adic integers. We find that it is ergodic on many small odd invariant sets, and that it has two odd cycles of period 2 in addition to its two odd fixed points. Finally, a plane embedding is presented, for which we establish affine self-similarity by using functional equations.
\end{abstract}

\section{Introduction}

It is an unsolved problem \cite{Guy, Lag10} to prove that the repeated iteration of the famous ``$3x+1$" map acting on the positive integers and defined by 
\begin{equation}\label{eq:T}
T(n) = \left\{\begin{array}{ll}
  \frac{3n+1}{2} & \mbox{if $n$ is odd,} \\
  \frac{n}{2} & \mbox{otherwise,} 
\end{array}\right.
\end{equation}
always leads to the value 1, whatever the starting integer of the sequence. And it is not even known whether the orbits $\left( n, T(n), T\left( T(n)\right) , \ldots\right) $ are bounded for all $n$, nor is it known if there exists any non-trivial cycle. This problem, whose origin remains unclear (cf. History and Background section in \cite[p. 5]{Lag10}), has received a great variety of names like the $3x+1$ problem, the Collatz conjecture, the Syracuse problem, Hasse's algorithm, \textit{la conjecture tch\`eque}, etc.

Its intrinsic hardness is frequently attributed to the unpredictability of the successive parities of the iterates in most sequences, until 1 is reached \cite{Aki04,Cra78}. Therefore it seems relevant to study the relationship between the integers and their parity vectors (see Definition \ref{def:parvec}), so as to address the question of the existence and nature of some underlying structure. 

\begin{definition}
\label{def:parvec}
For any two positive integers $j$ and $n$, we call $V_j(n)$ the {\em parity vector} of $n$, of length $j$,
\begin{equation}
V_j(n) = \left(  n, T(n), \ldots , T^{j-1}(n) \right)   \mod 2,
\end{equation}
where $T^k$ denotes the $k$-th iterate of $T$.
\end{definition}
This notion was introduced independently by Everett and Terras, and named the \textit{parity vector} by Lagarias \cite{Lag85}.

It was quite easy to state \cite{Eve77,Ter76} that any two integers have the same parity vector of length $j$ if and only if they belong to the same congruence class modulo $2^j$. From this property, we derive that each function $V_j$ sends with a one-to-one correspondence any set of $2^j$ consecutive integers to the set of all parity vectors of length $j$. There is consequently an infinite class of integers producing exactly any finite sequence of parities under iteration of $T$.
 
\section{Two formulae for the inverse transform}
In this part, we may freely extend the definition of the functions $T$ and $V_j$ to the ring $\mathbb{Z}$ of rational integers, as in \cite{Lag85}.

The transformation of an integer $n$ into its parity vector $V_j(n)$ is straightforward by applying the map $T$ repeatedly $j$ times. Conversely, one may use the forthcoming Lemma \ref{lem:inverse1} to obtain all the integers that have any given parity vector. In fact, it is a well-known expression with various formulations \cite{Boh78, Mon04, Ter76} and generalizations \cite{Ber96,Mol78}, further studied by Matthews and Watts in \cite{Mat84}.

\begin{definition}
Let $j$ be a positive integer. We say that a vector $S$ = ($s_0$, \ldots, $s_{j-1}$) of length $j$ is a finite {\em binary sequence} if $s_k = 0$ or $1$ for all $0 \leq k \leq j-1$.
We further define the {\em partial sum} functions $\sigma_k$ applying on $S$ by
\begin{equation} \label{eq:sigma_k}
\sigma_{k}(S) = \sum_{i=0}^{k} s_{i} \quad \text{for each } k \leq j-1.
\end{equation}
\end{definition}

\vspace{10pt}

The above functions $\sigma_k$ are essentially the same as the functions pop$_k$ introduced in \cite{Ber96} and used in a similar way. They frequently appear in various forms within the literature on the $3x+1$ problem.

\begin{lemma}{\em (First formulation of the inverse transform)} \label{lem:inverse1}
Let $S$ be a finite binary sequence $\left( s_0, s_1, \ldots, s_{j-1} \right)$ of length $j$. The set of integers $n$ for which $V_{j}(n) = S$ is given by the congruence class
\begin{equation} \label{eq:inverse1}
n \equiv - \sum_{k=0}^{j-1} s_{k} \, 2^{k} \, 3^{-\sigma_{k}(S)} \pmod{ 2^{j}}.
\end{equation}
\end{lemma}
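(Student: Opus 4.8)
The plan is to reduce the statement to exhibiting a single solution. By the Everett--Terras characterization quoted above, the set of integers $n$ with $V_j(n) = S$ is a full congruence class modulo $2^j$; hence it suffices to check that the particular integer $n_0 = n_0(S)$ whose residue modulo $2^j$ is given by the right-hand side of \eqref{eq:inverse1} does satisfy $V_j(n_0) = S$. (Here each $3^{-\sigma_k(S)}$ makes sense because $3$ is a unit modulo every power of $2$.) I would carry this out by induction on the length $j$. For $j = 1$ the claim is immediate: the formula returns $0 \bmod 2$ when $s_0 = 0$ and $-3^{-1} \equiv 1 \bmod 2$ when $s_0 = 1$, which is exactly the parity prescribed by $s_0$.

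For the inductive step, write $S' = (s_1, \ldots, s_j)$ for the truncation of $S = (s_0, \ldots, s_j)$, and recall that $V_{j+1}(n) = S$ holds if and only if $n \equiv s_0 \pmod 2$ and $V_j(T(n)) = S'$. The engine of the induction is therefore the single congruence $T(n_0(S)) \equiv n_0(S') \pmod{2^j}$, together with the easy observation that $n_0(S)$ has parity $s_0$ (the $k=0$ term of the defining sum is $0$ when $s_0 = 0$ and the odd unit $3^{-1}$ when $s_0 = 1$, while every term with $k \geq 1$ is even). Once this congruence is established, the inductive hypothesis gives $V_j(n_0(S')) = S'$, and since $V_j$ depends only on the residue modulo $2^j$ we obtain $V_j(T(n_0(S))) = S'$, hence $V_{j+1}(n_0(S)) = S$, as required.

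To verify $T(n_0(S)) \equiv n_0(S') \pmod{2^j}$ one splits on the value of $s_0$. If $s_0 = 0$ then $T(n_0(S)) = n_0(S)/2$, and halving the defining sum and shifting the summation index reduces matters to the identity $\sigma_{k+1}(S) = \sigma_k(S')$, valid because $s_0 = 0$. If $s_0 = 1$ then $T(n_0(S)) = (3 n_0(S) + 1)/2$; multiplying the defining sum by $3$ turns each $3^{-\sigma_k(S)}$ into $3^{\,1 - \sigma_k(S)}$, the summand $+1$ cancels the $k=0$ term (which equals $3^{\,1 - \sigma_0(S)} = 1$), and after halving and reindexing one invokes $\sigma_{k+1}(S) - 1 = \sigma_k(S')$, valid because $s_0 = 1$.

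I expect the only genuine friction to be bookkeeping: tracking how the exponent of $3$ shifts when passing from $S$ to its truncation $S'$, and being careful that dividing a congruence modulo $2^{j+1}$ by $2$ (both sides being even) yields a congruence modulo $2^j$ — which is precisely why the modulus in the statement drops by one at each step of the induction. Everything else amounts to a routine reindexing of a finite sum, so no deeper idea should be needed.
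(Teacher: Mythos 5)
Your proof is correct, but it runs in the opposite direction from the paper's. The paper fixes an integer $n$ with $V_j(n)=S$ and invokes the exact Terras identity
$2^{j}\,T^{j}(n) = 3^{\sigma_{j-1}(S)}\bigl( n + \sum_{k=0}^{j-1} s_k\,2^k\,3^{-\sigma_k(S)}\bigr)$,
proved by induction on $j$, which reduced modulo $2^{j}$ gives the \emph{necessity} of the congruence \eqref{eq:inverse1}; the converse inclusion is then supplied by the Everett--Terras fact that the solution set is a full congruence class modulo $2^j$. You instead prove \emph{sufficiency}: you verify by induction, via the recursion $V_{j+1}(n) = \bigl(n \bmod 2,\ V_j(T(n))\bigr)$ and the key congruence $T(n_0(S)) \equiv n_0(S') \pmod{2^j}$, that the explicit class has parity vector $S$, and you lean on Everett--Terras only for the uniqueness direction. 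Your bookkeeping is sound: the parity of $n_0(S)$ is $s_0$, the shift $\sigma_{k+1}(S)=\sigma_k(S')+s_0$ handles both cases, the $+1$ cancels the $k=0$ term exactly when $s_0=1$, and halving correctly drops the modulus from $2^{j+1}$ to $2^{j}$. The trade-off: the paper's route yields the stronger exact identity for $T^{j}(n)$ (not merely a congruence), which is reused verbatim for the conjugate map $U$ in Definition \ref{def:U}, while your route stays entirely modulo powers of $2$, is self-contained apart from the Everett--Terras characterization already quoted in the introduction, and has the merit of directly exhibiting that the class \eqref{eq:inverse1} is attained, though it produces only the congruence itself.
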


\begin{proof}
Suppose that $V_{j}(n) = S$. Then equation \eqref{eq:inverse1} follows from the formula
$$ 2^{j} \, T^{j}(n) = 3^{\sigma_{j-1}(S)} \left( n + \sum_{k=0}^{j-1} s_{k} \, 2^{k} \, 3^{-\sigma_{k}(S)} \right),$$
which is easy to state by induction on $j$ (see \cite{Ter76}).
\end{proof}

\begin{example}
By Lemma \ref{lem:inverse1}, the odd integers $n$ leading to sequences where every odd term is followed by exactly two even terms on the first $j$ iterations of the map $T$ are such that
$$n \equiv - \sum_{k=0}^{\left\lfloor\frac{j-1}{3}\right\rfloor} 8^{k} \, 3^{-(k+1)} \equiv - \frac{1}{3} \, \frac{\left( \frac{8}{3} \right)^{\left\lfloor\frac{j+2}{3}\right\rfloor} - 1}{\frac{8}{3} - 1} \equiv \frac{1}{5} \pmod{ 2^{j}}.$$
For the increasing lengths $j=3,6,9,\ldots$, the smallest positive values of $n$ are $5,13,205,\ldots$ respectively.
\end{example}

The discovery of a second formulation of the inverse transform came after studying the particular case of sequences where all terms but one are odd \cite{Roz17} (see also Example \ref{ex:inverse_vector}). It can be stated in different ways and we give a very short proof using a conjugate function. While this function already appears in the literature (e.g., \cite[p. 6]{Mon13} and \cite[p. 26]{Sim16}), the resulting formula in Theorem \ref{th:inverse2} seems new.

\begin{definition} \label{def:U}
Let us consider the function
$$ \begin{array}{rl}
 U: \mathbb Z & \longrightarrow \mathbb Z\\
 n & \longmapsto \left\{\begin{array}{ll}
  \frac{n+1}{2} & \mbox{if $n$ is odd,} \\
  \frac{3n}{2} & \mbox{otherwise.} 
\end{array}\right.
\end{array}$$
The conjugacy relationship
\begin{equation} \label{eq:U_conjug}
U(n+1) = T(n) + 1 
\end{equation}
holds for all $n$. Furthermore, for any binary sequence $S=\left( s_0, s_1, \ldots, s_{j-1} \right)$ of length $j$ and any integer $n$ such that $U^{k}(n) \equiv s_k \pmod{2}$ for each $k$, one has
\begin{equation} \label{eq:U_inverse}
n \equiv - \sum_{k=0}^{j-1} s_{k} \, 2^{k} \, 3^{\sigma_{k}(S)-k-1} \pmod{ 2^{j}}.
\end{equation}
\end{definition}
The latter congruence is derived from the equation
$$ 2^{j} \, U^{j}(n) = 3^{j-\sigma_{j-1}(S)} \left( n + \sum_{k=0}^{j-1} s_{k} \, 2^{k} \, 3^{\sigma_{k}(S)-k-1} \right),$$
which may be easily proved by induction on $j$, exactly as in Lemma \ref{lem:inverse1}.

We are now able to provide a second formulation of the inverse of the  functions $V_j$, which has little difference with the previous one. It turns out to be practical for sequences of $T$ iterations that contain many odd terms, because the corresponding terms in formula \eqref{eq:inverse2} vanish.

\begin{theorem}{\em (Second formulation of the inverse transform)} \label{th:inverse2}
Let $S$ be a finite binary sequence $\left( s_0, s_1, \ldots, s_{j-1} \right)$. The set of integers $n$ for which $V_{j}(n) = S$ is given by the congruence class
\begin{equation} \label{eq:inverse2}
n \equiv -1 - \sum_{k=0}^{j-1} (1-s_{k}) \, 2^{k} \, 3^{-\sigma_{k}(S)} \pmod{ 2^{j}}.
\end{equation}
\end{theorem}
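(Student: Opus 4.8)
The plan is to deduce Theorem \ref{th:inverse2} from the machinery already in place, rather than redoing an induction from scratch. The key observation is that the conjugacy \eqref{eq:U_conjug}, namely $U(n+1) = T(n)+1$, immediately gives $U^k(n+1) = T^k(n)+1$ for all $k \geq 0$ by a trivial induction. Hence if $V_j(n) = S = (s_0,\ldots,s_{j-1})$, then $T^k(n) \equiv s_k \pmod 2$, so $U^k(n+1) = T^k(n)+1 \equiv s_k + 1 \equiv 1 - s_k \pmod 2$. In other words, the integer $m := n+1$ satisfies the hypothesis of \eqref{eq:U_inverse} with the complemented binary sequence $\bar S := (1-s_0, \ldots, 1-s_{j-1})$ in place of $S$.

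The next step is bookkeeping with the partial sums. Writing $\bar s_k = 1-s_k$, we have $\sigma_k(\bar S) = \sum_{i=0}^k (1-s_i) = (k+1) - \sigma_k(S)$, so the exponent appearing in \eqref{eq:U_inverse} becomes $\sigma_k(\bar S) - k - 1 = -\sigma_k(S)$. Substituting $m = n+1$ and $\bar S$ for $S$ into \eqref{eq:U_inverse} therefore yields
\begin{equation*}
n + 1 \equiv - \sum_{k=0}^{j-1} (1-s_k)\, 2^k\, 3^{-\sigma_k(S)} \pmod{2^j},
\end{equation*}
which is exactly \eqref{eq:inverse2} after moving the $1$ to the right-hand side. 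For the converse direction — that every $n$ in this congruence class does have parity vector $S$ — one invokes the same counting fact cited after Definition \ref{def:parvec}: $V_j$ is a bijection from any $2^j$ consecutive integers onto the set of all length-$j$ binary sequences, so the single residue class mod $2^j$ identified above must be precisely the fiber $V_j^{-1}(S)$.

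I would present this as: first state the iterated conjugacy $U^k(n+1) = T^k(n)+1$; then translate the parity condition on the $T$-orbit of $n$ into the parity condition on the $U$-orbit of $n+1$ with the complemented sequence; then do the $\sigma_k(\bar S) = k+1-\sigma_k(S)$ substitution into the displayed congruence \eqref{eq:U_inverse}; and finally close the loop with the bijectivity remark. The only point requiring a little care — the "main obstacle," such as it is — is making sure the complementation identity for the partial sums is applied with the correct index range and that the off-by-one in the exponent ($\sigma_k(\bar S) - k - 1$ versus $-\sigma_k(S)$) is tracked correctly; everything else is formal. It is worth noting explicitly that no new induction is needed, since the induction underlying \eqref{eq:U_inverse} was already done when that equation was stated; this is what makes the proof "very short" as advertised.
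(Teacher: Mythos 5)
Your proof is correct and follows essentially the same route as the paper: apply the conjugacy $U^k(n+1)=T^k(n)+1$ to pass to the complemented sequence, then substitute $\sigma_k(\bar S)=k+1-\sigma_k(S)$ into the inverse formula \eqref{eq:U_inverse} applied to $n+1$. Your explicit closing remark on the converse direction (via the bijectivity of $V_j$ on $2^j$ consecutive integers) is a small addition the paper leaves implicit, but the argument is otherwise identical.
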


\begin{proof}
Let $n$ be such that $V_{j}(n) = S$, and consider the binary sequence $$S_U = \left( U^{k}(n+1) \mod 2 \right)_{k=0}^{j-1}.$$ The conjugacy \eqref{eq:U_conjug} gives $U^k(n+1) = T^k(n)+1$, so that $S_U = \left( 1-s_0, \ldots, 1-s_{j-1} \right)$ and $\sigma_{k}(S_U) = k+1-\sigma_{k}(S)$ for every $k$.

It suffices to write the inverse formula \eqref{eq:U_inverse} applied to $n+1$,
$$ n + 1 \equiv - \sum_{k=0}^{j-1} (1-s_{k}) \, 2^{k} \, 3^{\sigma_{k}(S_U)-k-1} \pmod{ 2^{j}},$$
to conclude the proof.
\end{proof}

\begin{example} \label{ex:inverse_vector}
Let $j$ be a positive integer. Suppose we want to find the integers $n$ for which the parity vector $V_{j}(n)$ contains exactly once the value 0. Then we can write that
$$ n \equiv -1 - \left( \frac{2}{3}\right)^{k} \pmod{2^j} $$
where $k$ is the only integer lower than $j$ such that $T^{k}(n)$ is even. See \cite[\S 6]{Roz17} for a brief study of those integers in $\mathbb{Z^+}$.
\end{example}

In fact, we obtain from Lemma \ref{lem:inverse1} and Theorem \ref{th:inverse2} infinitely many formulations by considering linear combinations of \eqref{eq:inverse1} and \eqref{eq:inverse2}. For example, a simple addition gives 
$$ 2n+1 \equiv - \sum_{k=0}^{j-1} 2^{k} \, 3^{-\sigma_{k}(V_j(n))} \pmod{ 2^{j}} \quad \text{for all integers $j>0$ and $n$}.$$
On the other hand, subtracting  the second formulation \eqref{eq:inverse2} from the first formulation \eqref{eq:inverse1} yields the non-trivial congruence in Corollary \ref{cor:invariant}.

\begin{corollary}
\label{cor:invariant}
Let $j$ be a positive integer. For any finite binary sequence $S=\left( s_0, s_1, \ldots, s_{j-1} \right)$, 
\begin{equation}\label{eq:invariant}
\sum_{k=0}^{j-1} (-1)^{s_{k}} \, 2^{k} \, 3^{-\sigma_{k}(S)} \equiv -1 \pmod{ 2^{j}}.
\end{equation}
\end{corollary}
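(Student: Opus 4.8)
The plan is to use the observation, recalled in the introduction, that $V_j$ is a bijection from any set of $2^j$ consecutive integers onto the set of all binary sequences of length $j$. Consequently, given an arbitrary finite binary sequence $S = (s_0, \ldots, s_{j-1})$, there exists an integer $n$ with $V_j(n) = S$, and \emph{both} inverse formulas can be applied to this one $n$: Lemma~\ref{lem:inverse1} yields
$$ n \equiv - \sum_{k=0}^{j-1} s_k\, 2^k\, 3^{-\sigma_k(S)} \pmod{2^j}, $$
and Theorem~\ref{th:inverse2} yields
$$ n \equiv -1 - \sum_{k=0}^{j-1} (1-s_k)\, 2^k\, 3^{-\sigma_k(S)} \pmod{2^j}, $$
where throughout $3^{-m}$ is read as the inverse of $3^m$ in the group of units modulo $2^j$ (well defined since $3$ is odd).

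The decisive step is then to equate these two expressions for the common residue $n$ and cancel it. Moving all terms to one side gives
$$ 1 + \sum_{k=0}^{j-1} \bigl( (1-s_k) - s_k \bigr)\, 2^k\, 3^{-\sigma_k(S)} \equiv 0 \pmod{2^j}. $$
Since $(1-s_k) - s_k = 1 - 2 s_k = (-1)^{s_k}$ for every $s_k \in \{0,1\}$, this rearranges immediately into \eqref{eq:invariant}.

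I do not anticipate a genuine obstacle: the argument is a one-line cancellation once Lemma~\ref{lem:inverse1} and Theorem~\ref{th:inverse2} are in hand. The only points needing a word of justification are (a) that every binary sequence of length $j$ — not merely those exhibited as the parity vector of some prescribed integer — lies in the image of $V_j$, which is exactly the Everett–Terras correspondence quoted in the introduction, and (b) the elementary identity $1 - 2 s_k = (-1)^{s_k}$ on $\{0,1\}$, which aligns the coefficients with the statement. As a cross-check, and as an alternative route that avoids the inverse transforms altogether, one can prove \eqref{eq:invariant} directly by induction on $j$: splitting off the $k=0$ term and using $\sigma_k(s_0,\ldots,s_{j-1}) = s_0 + \sigma_{k-1}(s_1,\ldots,s_{j-1})$ factors the length-$j$ sum as $(-1)^{s_0} 3^{-s_0} + 2\cdot 3^{-s_0}\cdot(\text{length-}(j-1)\text{ sum})$, and the factor $2$ promotes the induction hypothesis modulo $2^{j-1}$ to the claim modulo $2^j$ after checking the two cases $s_0=0$ and $s_0=1$. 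I would nonetheless write up the subtraction argument, since it is shorter and matches the remark preceding the corollary.
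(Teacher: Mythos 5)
Your argument is correct and is essentially identical to the paper's proof: pick an integer $n$ with $V_j(n)=S$ (guaranteed by the Everett--Terras bijection), subtract the congruence of Theorem~\ref{th:inverse2} from that of Lemma~\ref{lem:inverse1}, and use $(1-s_k)-s_k=(-1)^{s_k}$. Your closing remark about a direct induction on $j$ also matches the paper's comment following the corollary, so there is nothing to add.
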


\begin{proof}
Let $n$ be a positive integer such that $V_j(n) = S$. Subtracting each side of \eqref{eq:inverse2} from the corresponding side of \eqref{eq:inverse1} gives the desired result, by writing 
$(-1)^{s_k} = (1-s_{k}) - s_k$ for any $k$.
\end{proof}

This corollary can also be proved directly by induction on $j$, then leading to an alternate proof of Theorem \ref{th:inverse2}, which is left to the reader.

\section{Ultrametric extension}
\label{sec:2-adic}
\subsection{The space of 2-adic integers}
Following Hasses's generalization of the $3x+1$ problem, it was suggested \cite{Mat84,Mol78} to extend the definition of the map $T$ to the ring $\mathbb Z_2$ of 2-adic integers, that is, numbers of the form $\sum_{k=0}^{\infty} a_{k} 2^{k}$ with $a_k = 0$ or 1 for all $k$. The standard shorthand notation $(\ldots a_{2}a_{1}a_{0})_{2}$ from right to left\footnote{Some authors prefer to write the 2-adic ``digits'' from left to right.}  may be used for the sake of conciseness, and the parentheses are most often omitted. A periodic expansion is usually indicated by an upper bar. For example, one may write 
$$ (\ldots010101)_2 = \overline{01}_2 = \sum_{k=0}^{\infty}  2^{2k} = -\frac{1}{3}.$$
Recall that all rational numbers with an odd denominator has an eventually periodic  expansion in $\mathbb Z_2$.
 
A metric can be derived from the 2-adic norm
$$ \left|\sum_{k=0}^{\infty} a_{k} 2^{k}\right|_2 = 2^{-l} \quad \text{with } l=\min \left\lbrace k \geq 0: a_k \neq 0 \right\rbrace, \quad \text{and } |0|_2=0.$$
The space $\mathbb Z_2$ is then said to be \textit{ultrametric}, due to the strong triangle inequality $$|x+z|_2 \leq \max\left( |x+y|_2 , |y+z|_2 \right)$$ for all $x$, $y$ and $z$. Therefore it is not Euclidean.

When needed, we apply the usual Haar measure on $\mathbb Z_2$, here noted $\mu$, such that $\mu(\mathbb Z_2)=1$, and refer to it as the 2-adic measure.

The function $T$ remains well-defined on $\mathbb Z_2$, where it is known to be continuous and measure-preserving \cite{Mat84}. As was observed many times \cite{Aki04, Lag90, Mon04}, iterating $T$ on $\mathbb Z_2$ leads to a much greater variety of behaviors, due to its ergodic and strongly mixing dynamics \cite{Mat84} and interesting properties thus arise.

\subsection{Parity sequences}
\label{sub:parseq}
Let first introduce the notion of \textit{parity sequence}.

\begin{definition} \label{def:V_inf}
For every 2-adic integer $x$, the infinite binary sequence 
\begin{equation} \label{eq:V_inf}
V_{\infty}(x) = \left(  x, T(x), T^{2}(x), \ldots \right)   \mod 2
\end{equation}
is called the {\em parity sequence} of $x$.
\end{definition}

It is remarkable, as mentioned in \cite{Lag85}, that the $V_{\infty}$ function is a one-to-one and onto transform from $\mathbb Z_2$ to $\{0, 1\}^{\infty}$. Every infinite binary sequence is the parity sequence, via $T$ iteration, of exactly one 2-adic integer. As a consequence, there exist 2-adic cycles of every period. A complete list of the 23 cycles of period at most 6 is given in \cite{Lag90}. Since eventually periodic sequences have density zero in $\{0, 1\}^{\infty}$, we infer that almost all orbits in $\mathbb Z_2$ do not contain a cycle.

From Lemma \ref{lem:inverse1} and Theorem \ref{th:inverse2}, one immediately derives two formulae to express the inverse transform $V_{\infty}^{-1}$.

\begin{corollary}
\label{cor:inverse_2adic}
Let $S$ be an infinite binary sequence $\left( s_0, s_1, s_2, \ldots \right)$. The 2-adic integer $x$ such that $V_{\infty}(x) = S$ is given by any of the 2-adically convergent expansions
\begin{equation} \label{eq:inverse_2adic1}
x = - \sum_{k=0}^{\infty} s_{k} \, 2^{k} \, 3^{-\sigma_{k}(S)}
\end{equation}
and
\begin{equation} \label{eq:inverse_2adic2}
x = -1 - \sum_{k=0}^{\infty} (1 -s_{k}) \, 2^{k} \, 3^{-\sigma_{k}(S)}.
\end{equation}
where $\sigma_{k}$ denotes the partial sum function $\sigma_{k}(S)=\sum_{i=0}^{k}s_{i}$.\end{corollary}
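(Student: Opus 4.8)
The plan is to obtain both expansions as the $j\to\infty$ limits of the finite congruences of Lemma \ref{lem:inverse1} and Theorem \ref{th:inverse2}. First I would check convergence in $\mathbb{Z}_2$: since $3$ is a unit in $\mathbb{Z}_2$, each factor $3^{-\sigma_k(S)}$ is a $2$-adic integer, so the $k$-th term of either series has $2$-adic norm at most $|2^k|_2 = 2^{-k}\to 0$. By the ultrametric inequality the partial sums are Cauchy, hence both series converge; call their sums $x_1$ (for \eqref{eq:inverse_2adic1}) and $x_2$ (for \eqref{eq:inverse_2adic2}).

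Next, fix $j>0$ and write $S_j = (s_0,\dots,s_{j-1})$ for the length-$j$ prefix of $S$. For every $k\ge j$ the term $s_k\,2^k\,3^{-\sigma_k(S)}$ lies in $2^j\mathbb{Z}_2$, so truncating the series gives $x_1 \equiv -\sum_{k=0}^{j-1} s_k\,2^k\,3^{-\sigma_k(S)} \pmod{2^j}$; moreover $\sigma_k(S) = \sigma_k(S_j)$ for all $k\le j-1$, so the right-hand side is precisely the residue class assigned to $S_j$ by Lemma \ref{lem:inverse1}. Since the first $j$ parities of any $2$-adic integer depend only on its residue modulo $2^j$ (the extension to $\mathbb{Z}_2$ of the Everett--Terras fact recalled after Definition \ref{def:parvec}), it follows that $V_j(x_1) = S_j$. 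Running the same argument with Theorem \ref{th:inverse2} in place of Lemma \ref{lem:inverse1} yields $V_j(x_2) = S_j$ as well.

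Finally, $V_j(y)$ is by Definitions \ref{def:parvec} and \ref{def:V_inf} the length-$j$ prefix of $V_\infty(y)$ for every $y\in\mathbb{Z}_2$. Having shown $V_j(x_1) = S_j$ for all $j$, we get $V_\infty(x_1) = S$, and likewise $V_\infty(x_2) = S$. Because $V_\infty$ is a bijection from $\mathbb{Z}_2$ onto $\{0,1\}^\infty$ (as recalled above), this forces $x_1 = x_2 = x$, the unique $2$-adic integer with parity sequence $S$, which is exactly the claim.

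I do not anticipate a genuine obstacle here: the corollary is essentially a passage to the limit. The only point requiring a word of care is the compatibility used twice above — that $V_j$ on $\mathbb{Z}_2$ factors through $\mathbb{Z}/2^j\mathbb{Z}$ and that $V_j$ is the $j$-truncation of $V_\infty$ — both of which are immediate from the definitions and the continuity of $T$ on $\mathbb{Z}_2$.
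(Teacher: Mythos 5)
Your argument is correct and follows the same route the paper intends: the corollary is presented there as an immediate passage to the limit from Lemma \ref{lem:inverse1} and Theorem \ref{th:inverse2}, which is precisely what you carry out, with the convergence, truncation, and the mod-$2^j$ dependence of $V_j$ spelled out explicitly. No issues to flag.
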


\begin{example}\label{ex:inverse_2adic}
Consider the binary sequence $S = \left( s_0, s_1, s_2, \ldots \right)$ where $s_k = 1$ for all $k$. Applying the inverse transform \eqref{eq:inverse_2adic2}, we get $V_{\infty}^{-1}(S) = -1 = \ldots111111_{2}$.
\end{example}

The question whether the inverse formula \eqref{eq:inverse_2adic1} leads to a convergent series when evaluated in the set of real numbers has been investigated in various papers \cite{Edg12,Lop09,Mat84}. Note that the sum of the series is negative or zero, when it exists. Interestingly, both series on the right hand side of \eqref{eq:inverse_2adic1} and \eqref{eq:inverse_2adic2} are expected to be divergent (resp. convergent) for the parity sequences of positive (resp. negative) rational integers. 

One may also remark that the equation \eqref{eq:inverse_2adic1} is quite similar to the expression of the real function $\theta$ mentioned at the end of Coquet's paper \cite[\S 7]{Coq83}, which is convergent and turns out to be fractal.

We do not further discuss this issue in the present paper.

\subsection{Automorphism}
It is convenient to encode every parity sequence as a 2-adic integer, so as to give it a rational value when it is eventually periodic, as done by Lagarias in \cite{Lag85}.  This yields an automorphism in $\mathbb Z_2$.

\begin{definition} \label{def:Q}
 Let $Q$ denote the  function 
$$ \begin{array}{rl}Q:  \mathbb Z_2 & \longrightarrow \mathbb Z_2\\
 x & \longmapsto \sum_{k=0}^{\infty} s_k 2^{k} \end{array}$$
 where $( s_0, s_1, s_2, \ldots )$ is the parity sequence of $x$, as defined by \eqref{eq:V_inf}.
\end{definition}

The function $Q$ is a one-to-one and onto morphism \cite{Ber96,Lag85}. It is also non-expanding\footnote{In \cite{Ber96}, the functions having the property \eqref{eq:Q_sol1} are said to be \textit{solenoidal}.} with respect to the 2-adic norm, since it satisfies the 1-Lipschitz condition 
$$|Q(x) - Q(y)|_{2} \leq |x - y|_{2} \quad \text{for all $x$ and $y$,}$$
or equivalently,
\begin{equation}\label{eq:Q_sol1}
x \equiv y \pmod{2^n} \Longrightarrow  Q(x) \equiv Q(y) \pmod{2^n}.
\end{equation}
The fact that $Q$ is one-to-one further implies (see \cite{Ana06}) the reciprocal
\begin{equation}\label{eq:Q_sol2}
x \equiv y \pmod{2^n} \Longleftrightarrow  Q(x) \equiv Q(y) \pmod{2^n},
\end{equation}
which makes $Q$ a 2-adic isometry \cite{Ber96}.

For convenience purposes, we prefer to use the simple notation $Q$, as in \cite{Aki04}, rather than the original notation $Q_{\infty}$. Its inverse $Q^{-1}$, called the $3x+1$ \textit{conjugacy map} and denoted by $\Phi$ in \cite{Ber96, Mon04, Mon13}, is known \cite{Ber94} to conjugate the map $T$ with the shift map $\mathcal{S}$ whose definition follows. For the sake of clarity, we rephrase all known and conjectured properties related to $\Phi$ in terms of the function $Q$.
\begin{definition}
Let the {\em shift map} $\mathcal{S}$ denote the function 
$$ \begin{array}{rl}\mathcal{S}:  \mathbb Z_2 & \longrightarrow \mathbb Z_2\\
 x & \longmapsto \left\{\begin{array}{ll}
  \frac{x-1}{2} & \mbox{for x odd,} \\
  \frac{x}{2} & \mbox{otherwise.} 
\end{array}\right.
\end{array}$$
The conjugacy
\begin{equation} \label{eq:TS_conjug}
T = Q^{-1} \circ \mathcal{S} \circ Q
\end{equation}
holds.
\end{definition}
 
In the context of the original $3x+1$ problem, it is crucial to determine which values of $Q$ are rational. Indeed, we have the well-known statements below holding for all 2-adic integers $x$:
\begin{itemize}
\item The orbit $\left(  x, T(x), T^{2}(x), \ldots \right) $ is eventually periodic if and only if $Q(x)$ is rational \cite{Aki04}. This is an immediate consequence of the conjugacy \eqref{eq:TS_conjug} as noted by Monks \cite{Mon04}.
\item If $ Q(x) \text{ is rational}$, then $ x \text{ is rational}$  \cite{Ber94} (see also \cite{Aki04}, Theorem 5).
\end{itemize}
It results that all the cycles within the dynamics of $T$ on $\mathbb Z_2$ are rational \cite{Lag90}. Lagarias' Periodicity Conjecture \cite{Lag85} asserts that the reciprocal of the second statement above also holds. This would imply that every rational point in $\mathbb Z_2$ is preperiodic.

\begin{conjecture} {\em (Periodicity Conjecture)} \label{conj:periodicity}
For any 2-adic integer $x$, $Q(x)$ is rational if and only if $x$ is rational.
\end{conjecture}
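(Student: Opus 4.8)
The plan is to separate the two implications. The direction ``$Q(x)$ rational $\Rightarrow x$ rational'' is already available from \cite{Ber94} (see also \cite{Aki04}, Theorem~5), so the entire content of the conjecture is the converse, ``$x$ rational $\Rightarrow Q(x)$ rational''. By the first bulleted statement above --- a consequence of the conjugacy \eqref{eq:TS_conjug} --- $Q(x)$ is rational exactly when the $T$-orbit of $x$ is eventually periodic, so it suffices to prove that \emph{every rational $2$-adic integer is preperiodic under $T$}. I would fix $x = p/q \in \mathbb Z_2$ with $q$ odd and study the forward orbit of $x$.

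The first step is to observe that the orbit never leaves $\tfrac1q\mathbb Z = \{m/q : m \in \mathbb Z\}$: if $m/q$ is a unit then $m$ is odd, $3m+q$ is even, and $T(m/q) = \big((3m+q)/2\big)/q$, while if $m$ is even then $T(m/q) = (m/2)/q$. Hence $T$ restricts to the ``$3x+q$'' map $m \mapsto (3m+q)/2$ (for $m$ odd), $m \mapsto m/2$ (for $m$ even) acting on the numerators, and preperiodicity of the orbit of $p/q$ is \emph{equivalent} to boundedness of the integer orbit $(m_0 = p, m_1, m_2, \dots)$, since a deterministic dynamical system on a finite state space is eventually periodic. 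The problem is thereby reduced to an a priori bound $\sup_i |m_i| < \infty$, allowed to depend on $p$ and $q$.

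For the boundedness I would try to transplant the tools known for the classical case $q = 1$. Reading Lemma~\ref{lem:inverse1} along the orbit yields, after $j$ steps of which $b$ are odd, an identity of the shape $2^{\,j} m_j = 3^{\,b}\big(p + q\,R_j\big)$ with $R_j = \sum_{k} s_k 2^{k} 3^{-\sigma_k}$ a partial sum of the type appearing in \eqref{eq:inverse1}; estimating $|R_j|$ shows the orbit can grow only through the multiplicative factor $3^{\,b}2^{-j}$, so a uniform bound follows once one knows that the density of odd iterates stays, in the long run, strictly below $\log 2/\log 3$. That is precisely the (unproven) equidistribution heuristic that blocks the original Collatz conjecture. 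A more algebraic alternative would be to seek an invariant of the conjugacy $Q$ --- in the spirit of Corollary~\ref{cor:invariant} and of the polynomial identities of \cite{Mon04} --- forcing the numerators of rational points into a bounded recurrence; but no such invariant is presently known.

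The honest assessment is that the main obstacle \emph{is} the conjecture itself: the reduction above identifies it with a Collatz-type boundedness statement for the family of ``$3x+q$'' maps, and already the sub-case $q = 1$ restricted to $\mathbb Z^{+}$ is the open Collatz problem. I therefore do not expect to close the argument; what one can realistically deliver is the reduction to orbit-boundedness, the remark that a single bounded rational orbit already forces periodicity, and a precise catalogue of the obstructions (equidistribution of odd iterates; absence of a controlling algebraic invariant) that any successful proof would have to overcome.
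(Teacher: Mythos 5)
The statement you were asked to prove is not a theorem of the paper: it is Lagarias' Periodicity Conjecture, stated there as Conjecture \ref{conj:periodicity}, and the paper gives no proof of it (none is known in the literature). Measured against that, your assessment is accurate rather than deficient: the implication ``$Q(x)$ rational $\Rightarrow x$ rational'' is indeed the known result of \cite{Ber94} (see also \cite{Aki04}), and the converse is the whole open content. The partial reductions you do carry out are sound. By the conjugacy \eqref{eq:TS_conjug}, rationality of $Q(x)$ is equivalent to eventual periodicity of the $T$-orbit of $x$; for $x=p/q$ with $q$ odd the orbit stays in $\tfrac1q\mathbb Z$ (your parity bookkeeping is correct, since $q$ odd makes the $2$-adic parity of $m/q$ equal that of $m$), the numerators evolve under the ``$3x+q$'' analogue of $T$, and boundedness of the numerator orbit is equivalent to preperiodicity because a bounded integer orbit is confined to a finite set. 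This is the standard reduction (compare the treatment of rational cycles in \cite{Lag90}) and it correctly locates the obstruction.

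What must be flagged is simply that this is a research plan, not a proof: the decisive step --- a uniform bound on the numerators, equivalently keeping the asymptotic density of odd iterates below $\log 2/\log 3$ along every rational orbit --- is exactly the unproven part, and as you yourself note the sub-case $q=1$ on $\mathbb Z^{+}$ already contains the $3x+1$ problem, while the negative and general-$q$ cases are open as well. Corollary \ref{cor:invariant} and the functional equations of Theorem \ref{th:Q_feq} do not supply the controlling invariant you would need. So there is no dishonesty and no wrong step in your write-up, but also no proof --- which is consistent with the paper, where the statement is deliberately left as a conjecture; a referee should treat your text as a correct framing of the problem, not as a solution.
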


 Furthermore, the function $Q$ allows to formulate differently the $3x+1$ problem \cite{Ber94,Lag85}.
\begin{conjecture} {\em ($3x+1$ problem)} \label{conj:Q_3x+1}
$$ Q\left( \mathbb Z^{+}\right) \subset \frac{1}{3} \mathbb Z , \quad \text{ or equivalently, } \quad \mathbb Z^{+} \subset Q^{-1}\left( \frac{1}{3} \mathbb Z \right).$$
\end{conjecture}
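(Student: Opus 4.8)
The equivalence inside the statement costs nothing: since $Q$ is a bijection of $\mathbb Z_2$ (Definition~\ref{def:Q} and the remarks following it), for any $B\subseteq\mathbb Z_2$ one has $Q(\mathbb Z^{+})\subseteq B$ if and only if $\mathbb Z^{+}\subseteq Q^{-1}(B)$; apply this with $B=\tfrac13\mathbb Z$. Hence the whole content is the single inclusion $Q(\mathbb Z^{+})\subseteq\tfrac13\mathbb Z$, and the plan is to show it coincides, word for word, with the classical assertion that the $T$-orbit of every positive integer reaches $1$.

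The first step is to locate $\tfrac13\mathbb Z$ inside $\mathbb Z_2$. From $\tfrac13\mathbb Z=\mathbb Z\cup(\tfrac13+\mathbb Z)\cup(\tfrac23+\mathbb Z)$ together with $-\tfrac13=\overline{01}_2$, a short computation shows that $\tfrac13\mathbb Z$ is exactly the set of $2$-adic integers whose binary expansion is eventually periodic of period $1$ or $2$ --- that is, eventually constant or eventually alternating. Since the expansion of $Q(x)$ is by definition the parity sequence $V_\infty(x)$, this translates into: $Q(x)\in\tfrac13\mathbb Z$ precisely when $V_\infty(x)$ is eventually one of $(0,0,\dots)$, $(1,1,\dots)$, $(0,1,0,1,\dots)$, $(1,0,1,0,\dots)$.

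The second step is to push this back along the conjugacy $T=Q^{-1}\circ\mathcal S\circ Q$ of~\eqref{eq:TS_conjug}. If $V_\infty(x)$ is eventually periodic, then the orbit $(T^{k}(x))_{k\ge0}$ is eventually periodic (equivalently, $Q(x)$ is rational) and hence enters a cycle of $T$; and an eventual parity period $\le 2$ forces that cycle to have parity period $1$ or $2$. Since $V_\infty$ is injective, a purely periodic parity sequence of period $\le2$ has a single preimage, one of $0,-1,1,2$, so the cycle entered must be $\{0\}$, $\{-1\}$ or $\{1,2\}$. A positive integer never meets $0$ or $-1$ under $T$, and conversely any positive integer whose orbit reaches $1$ has parity sequence eventually $(1,0,1,0,\dots)$ or $(0,1,0,1,\dots)$. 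Therefore, for $n\in\mathbb Z^{+}$, the condition $Q(n)\in\tfrac13\mathbb Z$ holds if and only if the orbit of $n$ reaches $1$, and Conjecture~\ref{conj:Q_3x+1} is the Collatz conjecture verbatim.

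That reduction is the easy part; the hard part --- indeed the entire difficulty --- is the reduced statement itself, for it is the open problem and no technique is known that settles it \cite{Lag10}. The obstruction is exactly the pseudo-random behaviour of the parity vectors that runs through this paper: the equidistribution of congruence classes over parity vectors (the Everett--Terras property recalled in the introduction) makes a ``typical'' orbit contract by a multiplicative factor $\approx 3/4$ per step, which makes the conjecture overwhelmingly plausible but gives no grip on any individual $n$. What one can actually prove points the same way but stays strictly weaker --- density-one statements (in the natural or logarithmic sense) that almost every orbit eventually attains small values, stopping-time bounds, and the exclusion of non-trivial cycles below very large heights --- and I would expect any attempt along these lines to stall precisely at the step where such ``almost all'' information, which belongs to the measure-theoretic and ergodic picture of $T$ on $\mathbb Z_2$, must be upgraded to a statement about \emph{every} positive integer.
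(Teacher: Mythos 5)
The statement you were asked about is Conjecture~\ref{conj:Q_3x+1}, i.e.\ the open $3x+1$ problem itself: the paper offers no proof, only the remark that it asserts every positive integer has an eventually periodic parity sequence ending in the alternation $0,1,0,1,\ldots$, which occurs exactly when some iterate reaches the trivial cycle $(1,2)$. Your reduction --- identifying $\tfrac13\mathbb Z\cap\mathbb Z_2$ with the $2$-adic integers whose expansion is eventually of period $1$ or $2$, excluding the constant-parity cases (preimages $0$ and $-1$) for positive $n$, and using injectivity of $V_\infty$ together with the conjugacy \eqref{eq:TS_conjug} to conclude that $Q(n)\in\tfrac13\mathbb Z$ holds precisely when the orbit of $n$ reaches $1$ --- is correct and coincides with the paper's own remark, and you are right that the remaining content is exactly the unresolved Collatz conjecture, so no proof was available to give.
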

It asserts that every positive integer has an eventually periodic parity sequence of period 2, ending with an infinite alternation of 0 and 1 (the case of a constant parity is excluded), which only occurs when some iterate reaches the trivial cycle (1,2). Note that the reverse inclusion in Conjecture \ref{conj:Q_3x+1} does not hold, since $Q^{-1}(1)=-1/3$, by formula \eqref{eq:inverse_2adic1}.

\subsection{Functional equations}
\label{sub:feq}
The semiring $\mathbb N$ of natural integers is completely generated by all finite compositions of the functions $x \longmapsto 2x$ and $x\longmapsto 2x+1$ starting from 0, thus reversing the action of the shift map $\mathcal{S}$. Therefore it is tempting to search for functional equations that express $Q(2x)$ and $Q(2x+1)$ from $Q(x)$. Such equations exist for any $x$ in $\mathbb Z_{2}$ or in a subset of $\mathbb Z_{2}$, as shown in Theorem \ref{th:Q_feq}. It turns out that equation \eqref{eq:Q_feq_2xp1} is a sort of 2-adic extension of previous results by Andaloro \cite{And00} and Garner \cite{Gar85}. We also establish similar equations for the inverse transform $Q^{-1}$ (see \cite{Edg12} for a generalization).
\begin{theorem} \label{th:Q_feq}
The functions $Q^{-1}$ and $Q$ are solution to the functional equations
\begin{align}
 Q^{-1}(2 x) &= 2 Q^{-1}(x),  \label{eq:Qinv_feq_2x} \\
 Q^{-1}(2 x + 1) &= \frac{2 Q^{-1}(x) - 1}{3}, \label{eq:Qinv_feq_2xp1} \\
 Q(2 x) &= 2 Q(x)  \label{eq:Q_feq_2x} 
\end{align}
 for all 2-adic integers $x$. Moreover 
\begin{equation}
 Q(2 x + 1) = 2 Q(x) - 2^k + 1  \label{eq:Q_feq_2xp1} 
\end{equation}
for $x \equiv -1 - (-2)^{k-2}  \pmod{2^{k}}$ and $k \geq 2$.
\end{theorem}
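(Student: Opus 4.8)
The plan is to treat \eqref{eq:Qinv_feq_2x}, \eqref{eq:Qinv_feq_2xp1} and \eqref{eq:Q_feq_2x} as near-immediate consequences of the conjugacy \eqref{eq:TS_conjug}, equivalently of the relation $Q(n)=(n\bmod 2)+2\,Q(T(n))$ read off the definition of $Q$, and to concentrate the work on \eqref{eq:Q_feq_2xp1}.

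For \eqref{eq:Q_feq_2x}, note that $2x$ is even and $T(2x)=x$, so the parity sequence of $2x$ is $0$ followed by the parity sequence of $x$; hence $Q(2x)=2\,Q(x)$. Replacing $x$ by $Q^{-1}(x)$ gives \eqref{eq:Qinv_feq_2x}. For \eqref{eq:Qinv_feq_2xp1}, put $y=Q^{-1}(x)$ and $z=\tfrac{2y-1}{3}$; this is a well-defined $2$-adic integer since $3$ is a unit in $\mathbb Z_2$, and it is odd because $3z=2y-1$ is odd. Then $T(z)=\tfrac{3z+1}{2}=y$, so the parity sequence of $z$ is $1$ followed by that of $y$, whence $Q(z)=1+2\,Q(y)=2x+1$, i.e. $z=Q^{-1}(2x+1)$.

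For \eqref{eq:Q_feq_2xp1} I would first reduce to a statement about $3x+2$. Since $2x+1$ is odd with $T(2x+1)=3x+2$, we have $Q(2x+1)=1+2\,Q(3x+2)$, so \eqref{eq:Q_feq_2xp1} is equivalent to
\[
 g(x):=Q(3x+2)-Q(x)=-2^{k-1}\qquad\text{whenever }x\equiv -1-(-2)^{k-2}\pmod{2^{k}},\ k\ge 2 .
\]
This I would prove by induction on $k$, using two ingredients. First, a recursion: for \emph{every} odd $x$ one checks $T(3x+2)=3\,T(x)+2$, and then $Q(3x+2)=1+2\,Q(3T(x)+2)$ while $Q(x)=1+2\,Q(T(x))$, so $g(x)=2\,g(T(x))$. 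Second, a congruence step: if $x\equiv -1-(-2)^{k-2}\pmod{2^{k}}$ with $k\ge 3$ (so $x$ is odd), then $T(x)=\tfrac{3x+1}{2}\equiv -1-(-2)^{k-3}\pmod{2^{k-1}}$; the mechanism is the identity $3(-2)^{k-3}=-(-2)^{k-3}+(-2)^{k-1}$ together with $(-2)^{k-1}\equiv 0\pmod{2^{k-1}}$, so $T$ carries the hypothesis at level $k$ to the hypothesis at level $k-1$. The base case $k=2$ is $x\equiv 2\pmod 4$: write $x=2y$ with $y$ odd, so $3x+2=2(3y+1)$ and $g(x)=2\bigl(Q(3y+1)-Q(y)\bigr)$; since $3y+1$ is even with $T(3y+1)=\tfrac{3y+1}{2}=T(y)$, one gets $Q(3y+1)=2\,Q(T(y))$ and $Q(y)=1+2\,Q(T(y))$, hence $g(x)=-2$. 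Iterating the recursion $k-2$ times down to the base case gives $g(x)=2^{k-2}\cdot(-2)=-2^{k-1}$, and therefore $Q(2x+1)=1+2\,Q(3x+2)=1+2\,Q(x)+2\,g(x)=2\,Q(x)-2^{k}+1$.

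The part I expect to be the main obstacle is not any single computation but pinning down \emph{which} congruence class is the right one in \eqref{eq:Q_feq_2xp1}: the recursion $g(x)=2\,g(T(x))$ only helps if $T$ maps the admissible set for $k$ into the admissible set for $k-1$, and it is this requirement that forces the exact expression $-1-(-2)^{k-2}$. Beyond that, one only has to be careful that each division — by $3$, and by $2$ after checking a parity — is legitimate in $\mathbb Z_2$, and that the base case genuinely covers an even $x$ while all larger levels involve odd $x$.
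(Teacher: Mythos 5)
Your proposal is correct; all four identities are established without gaps. For the first three equations your route is essentially the paper's, modulo a reshuffle: the paper proves \eqref{eq:Qinv_feq_2x}--\eqref{eq:Qinv_feq_2xp1} first, from the conjugacy $T\circ Q^{-1}=Q^{-1}\circ\mathcal{S}$ together with $Q^{-1}(y)\equiv y\pmod 2$, and then obtains \eqref{eq:Q_feq_2x} by substituting $Q(x)$ into \eqref{eq:Qinv_feq_2x}; you prove $Q(2x)=2Q(x)$ directly from the prefix structure of the parity sequence and deduce \eqref{eq:Qinv_feq_2x}, and you get \eqref{eq:Qinv_feq_2xp1} by exhibiting the odd preimage $z=\frac{2Q^{-1}(x)-1}{3}$ with $T(z)=Q^{-1}(x)$ --- same mechanism, different packaging. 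The genuinely different part is \eqref{eq:Q_feq_2xp1}. The paper argues by explicit orbit tracking: for $x=-1-(-2)^{k-2}+2^k y$ it computes $T^{k-2}(x)=2+4z$, reads off the parity vectors $V_k(x)=(1,\dots,1,0,1)$ and $V_{k+1}(2x+1)=(1,\dots,1,0,0)$, and observes that both orbits coalesce at $2+3z$, from which the two $Q$-expansions are compared term by term. You instead reduce via $Q(2x+1)=1+2\,Q(3x+2)$ to the quantity $g(x)=Q(3x+2)-Q(x)$ and run an induction on $k$, powered by the commutation $T(3x+2)=3T(x)+2$ for odd $x$ (giving $g(x)=2\,g(T(x))$), the verification that $T$ maps the admissible class modulo $2^k$ into the admissible class modulo $2^{k-1}$, and a base case at $k=2$ which is precisely the coalescence identity \eqref{eq:Q_feq_3xp1} of the paper in disguise. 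I checked the computations ($3x+2$ odd for odd $x$; $T(x)\equiv-1-(-2)^{k-3}\pmod{2^{k-1}}$ via $3(-2)^{k-3}=-(-2)^{k-3}+(-2)^{k-1}$; $g\equiv-2$ on $x\equiv 2\pmod 4$) and they are sound. The paper's computation buys an explicit combinatorial picture --- the exact parity vectors and the common coalescence value $2+3z$ --- while your induction is less computational, avoids the explicit $2$-adic orbit formulas such as $-1-(-3)^{k-2}+3^{k-2}(4y)$, and isolates conceptually why the class $-1-(-2)^{k-2}$ is the right hypothesis: it is exactly the condition propagated one level down by $T$, which also explains the density-$1/2$ limitation discussed after the theorem.
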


\begin{proof}[Proof of \eqref{eq:Qinv_feq_2x} and \eqref{eq:Qinv_feq_2xp1}]
First, one may rewrite equation \eqref{eq:TS_conjug} as
\begin{equation}\label{eq:TS_conjug2}
 T \circ Q^{-1} = Q^{-1} \circ \mathcal{S}. 
\end{equation}
Take a 2-adic integer $x$. Putting together \eqref{eq:TS_conjug2} with the fact that $$\mathcal{S}(2x+1) = \mathcal{S}(2x) = x$$ and that $$Q^{-1}(y) \equiv y \pmod{2} \quad \text{for all }y,$$  we obtain
$$Q^{-1}(x) = Q^{-1} \circ \mathcal{S}(2x) = T \circ Q^{-1}(2x) = \frac{Q^{-1}(2x)}{2}$$
and
$$Q^{-1}(x) = Q^{-1} \circ \mathcal{S}(2x+1)  = T \circ Q^{-1}(2x+1) = \frac{3 Q^{-1}(2x+1) + 1}{2}.$$
\end{proof}

\begin{proof}[Proof of \eqref{eq:Q_feq_2x}]
Replacing $x$ by $Q(x)$ in \eqref{eq:Qinv_feq_2x} gives $Q^{-1}(2 Q(x)) = 2 x$, leading to $2Q(x) = Q(2x)$.
\end{proof}

\begin{proof}[Proof of \eqref{eq:Q_feq_2xp1}]
Let $k \geq 2$ and let $x,y$ be 2-adic integers such that $x = -1 - (-2)^{k-2} +2^{k}y$. 
Starting from $x$ and applying repeatedly the map $T$, it is easily seen that the first $k-3$ iterates are odd, while the next one is even: $T^{k-2}(x) = -1 - (-3)^{k-2} + 3^{k-2} (4y) \equiv 2 \pmod{4}$. Setting $T^{k-2}(x) = 2+4z$, we get $T^{k-1}(x) = 1+2z$ and $T^{k}(x) = 2+3z$. Since $x$ has the parity vector $V_k(x) = (1, 1, \ldots, 1, 0, 1)$, one may write 
$$ Q(x) = 1 + 2 + \ldots + 2^{k-3} + 2^{k-1} + 2^{k} Q( 2+3z ) $$
for $k \geq 3$. In the case $k=2$, the above expression simplifies to $Q(x) = 2 + 4Q(2+3z) $.

On the other hand, starting from $2x+1$ and applying $k-1$ times the map $T$, we get after $(k-2)$ odd iterates the even value $T^{k-1}(2x+1) = -1 + (-3)^{k-1} + 3^{k-1} (4y) = 3T^{k-2}(x) +2 = 8+12z$. The next two iterates are $T^{k}(2x+1) = 4+6z$ and $T^{k+1}(2x+1) = 2+3z$.  It follows that $2x+1$ has the parity vector $V_{k+1}(2x+1) = (1, 1, \ldots, 1, 0, 0)$, so that 
$$ Q(2x+1) = 1 + 2 + \ldots + 2^{k-2} + 2^{k+1} Q(2+3z)  = 2Q(x) -2^{k} + 1.$$
\end{proof}

One may ask whether Theorem \ref{th:Q_feq} provides a general algorithm to calculate in a finite number of steps the exact value of the function $Q$ applied to an arbitrary positive integer. Unfortunately, the answer appears to be negative, since equation \eqref{eq:Q_feq_2xp1} only applies to a subset of $\mathbb N$ of density $2^{-2}+2^{-3}+2^{-4}+\ldots = 1/2$.

The case $k=2$  can be restated as
\begin{equation}
Q(4 x + 1) = 4 Q(x) - 3  \quad \text{for }x \equiv 1  \pmod{2} \label{eq:Q_feq_4xp1} 
\end{equation}
by replacing $x$ by $2x$ in \eqref{eq:Q_feq_2xp1} and using \eqref{eq:Q_feq_2x}. One may further combine \eqref{eq:Q_feq_2xp1} and \eqref{eq:Q_feq_4xp1} to produce the functional equations
$$ Q(8x+5) = 4Q(2x+1) - 3 = 8Q(x) - 2^{k+2} + 1 $$
for $x \equiv -1 - (-2)^{k-2}  \pmod{2^{k}}$ and $k \geq 2$.

The function $Q$ satisfies many other functional equations that are not combinations of \eqref{eq:Q_feq_2x} and \eqref{eq:Q_feq_2xp1} like
\begin{equation} \label{eq:Q_feq_3xp1}
Q(3x+1) = Q(x) - 1\quad \text{for }x \equiv 1  \pmod{2}. 
\end{equation}
Such equations are always related to the phenomenon of coalescence within the dynamics of $T$. For example, the equation \eqref{eq:Q_feq_3xp1} derives directly from the equality $T(3x+1) = T(x)$ for $x \equiv 1 \pmod{2}$; see \cite{And00,Gar85} for other examples of generic coalescences.

\subsection{Ergodicity}
\label{sub:erg}
The ergodic dynamics of the $3x+1$ map $T$ on $\mathbb{Z}_2$ is quite well understood, and paradoxically, it does not provide any indication on the validity of the $3x+1$ Conjecture, as is discussed in \cite{Aki04}.

Nevertheless, in view of the Periodicity Conjecture, it could be helpful to better specify the dynamics of $Q$, which appears to be more complicated.

In what follows, we refer to \cite{Ana06} for the ergodicity of a measure-preserving function on the 2-adic integers.

Since $Q$ is isometric, it induces in the finite set $\mathbb{Z}/2^n\mathbb{Z}$ a permutation $Q_n$ whose behavior is easier to study.
\begin{definition}
For all integers $n \geq 0$, let $Q_n$ denote the function
$$ \begin{array}{rl}Q_n:  \mathbb{Z}/2^n\mathbb{Z} & \longrightarrow \mathbb{Z}/2^n\mathbb{Z}\\
 x & \longmapsto Q(x) \mod 2^n. \end{array}$$
\end{definition}

In \cite{Lag85}, Lagarias showed that the order of $Q_n$ is always a power of 2, and the following theorem was finally stated in \cite{Ber96}.

\begin{theorem} {\em (Bernstein, Lagarias)} \label{th:Qn_cycles}
For every positive integer $n$, the length of any cycle in $Q_n$ is a power of 2. Moreover, $Q_n$ is a permutation of order $2^{n-4}$ for $n \geq 6$.
\end{theorem}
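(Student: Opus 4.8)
The plan is to reduce the statement to a recursion between consecutive moduli, dispose of a small base case by direct computation, and then show the recursion is ``doubling'' for $n\ge 7$ with the functional equations of Theorem~\ref{th:Q_feq}.

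\textbf{Step 1: a recursion for the order.} Since $Q$ is a $2$-adic isometry, reduction modulo $2^{n-1}$ carries $Q_n$ to $Q_{n-1}$; hence $\mathrm{ord}(Q_{n-1})$ divides $\mathrm{ord}(Q_n)$, while $Q_n^{\mathrm{ord}(Q_{n-1})}$ fixes every class modulo $2^{n-1}$ and therefore only interchanges elements inside the fibres $\{a,a+2^{n-1}\}$, so it is an involution. Thus $\mathrm{ord}(Q_n)\in\{\mathrm{ord}(Q_{n-1}),\,2\,\mathrm{ord}(Q_{n-1})\}$. As $\mathrm{ord}(Q_1)=1$ (because $Q(x)\equiv x\pmod 2$), induction shows $\mathrm{ord}(Q_n)$ is a power of $2$ for every $n$; since every cycle length of a permutation divides its order, every cycle of $Q_n$ has length a power of $2$. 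This settles the first assertion and also recovers the order statement of Lagarias.

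\textbf{Step 2: reduction to a base case and a doubling claim.} By Step~1 it suffices to prove (a)~$\mathrm{ord}(Q_6)=4$ and (b)~$\mathrm{ord}(Q_n)=2\,\mathrm{ord}(Q_{n-1})$ for all $n\ge 7$, for then $\mathrm{ord}(Q_n)=2^{n-6}\,\mathrm{ord}(Q_6)=2^{n-4}$. Claim (a) is a finite computation: the iteration $9\to14\to7\to11\to17\to26$ gives $V_6(9)=(1,0,1,1,1,0)$, so $Q_6(9)=29$, and similarly $Q_6(29)=25$, $Q_6(25)=13$, $Q_6(13)=9$, exhibiting the $4$-cycle $(9\ 29\ 25\ 13)$ in $Q_6$; the remaining bound $Q^4(x)\equiv x\pmod{2^6}$ is checked over the residue classes, a verification streamlined by \eqref{eq:Q_feq_2x} and by the easily checked congruence $Q(x)\equiv x\pmod 4$.

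\textbf{Step 3: the doubling claim, and the main obstacle.} By Step~1, (b) is equivalent to producing, for each $n\ge 7$, a residue $a$ with $Q^{m}(a)\equiv a+2^{n-1}\pmod{2^n}$ where $m=\mathrm{ord}(Q_{n-1})=2^{n-5}$ by the inductive hypothesis; equivalently, the length-$2^{n-5}$ cycles of $Q_{n-1}$ must lift to length-$2^{n-4}$ cycles of $Q_n$. I would obtain this uniformly in $n$ by exhibiting a single $Q$-invariant clopen set of $2$-adic measure $2^{-4}$ on which $Q$ acts ergodically --- for instance $A=\{x:\ x\equiv 9,13,25\text{ or }29\pmod{2^6}\}$, which already carries the cycle of Step~2 --- since ergodicity of $Q|_A$ forces $Q_n$ to permute the $2^{n-4}$ residues of $A$ modulo $2^n$ in a single cycle for every $n\ge 6$, whence $\mathrm{ord}(Q_n)\ge 2^{n-4}$, and with Steps~1--2 this yields equality. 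Proving this ergodicity is the heart of the matter and the step I expect to be the main obstacle: the method would be to follow, by induction on $n$, the $2$-adic valuation of the defect $Q^{2^{n-5}}(x)-x$ for $x\in A$, using \eqref{eq:Q_feq_2x} to clear the even iterates and \eqref{eq:Q_feq_2xp1} for the odd ones, tracking the additive corrections $-2^k+1$ as $k$ grows and calling on coalescence identities such as the one behind \eqref{eq:Q_feq_3xp1} for inputs \eqref{eq:Q_feq_2xp1} does not cover, and to conclude that this valuation is exactly $n-1$. The delicate point is that the valuation must grow by exactly one at each doubling of the iteration count: Step~1 already forbids a larger jump, and the argument must rule out that it does not grow at all, which is where the structure of $Q$ on the residues $\equiv 1\pmod 4$ --- the region of $\mathbb Z_2$ on which the period genuinely increases, the dynamics near the fixed points $0$ and $-1$ being transparent --- must be used.
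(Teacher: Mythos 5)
First, note that the paper does not prove this statement at all: it is quoted from the literature, with the first part due to Lagarias \cite{Lag85} and the second to Bernstein and Lagarias \cite{Ber96}, the latter resting on the fact that the cycle $(5,17)$ of $Q_5$ has an ever-doubling period, certified by the nontrivial criterion restated as Theorem~\ref{th:cycle_edp}. Your Step~1 is correct and complete: since $Q$ is $1$-Lipschitz and bijective, $Q_n$ reduces to $Q_{n-1}$, the power $Q_n^{\mathrm{ord}(Q_{n-1})}$ acts within the fibres $\{a,a+2^{n-1}\}$ and is therefore an involution, so $\mathrm{ord}(Q_n)\in\{\mathrm{ord}(Q_{n-1}),2\,\mathrm{ord}(Q_{n-1})\}$, and with $Q_2=\mathrm{id}$ the first assertion follows; this is essentially Lagarias's original argument. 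Step~2 (the $4$-cycle $(9,29,25,13)$ of $Q_6$ and the finite check that $\mathrm{ord}(Q_6)=4$) is also fine.

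The genuine gap is in Step~3, and you flag it yourself: the ever-doubling behaviour is never proved, only planned. Worse, the proposed reduction is circular in structure. By Anashin's criterion (Theorem~\ref{th:ergodic}, transported to the clopen set $A$ as in Theorem~\ref{th:Q_ergodic}), ergodicity of $Q$ restricted to $A=\{x:\ x\equiv 9,13,25,29 \pmod{2^6}\}$ is \emph{equivalent} to the statement that $Q_n$ acts on the residues of $A$ modulo $2^n$ in a single cycle for every $n\ge 6$, i.e.\ to the very period-doubling at every level that you are trying to establish. So invoking ergodicity buys nothing unless it is proved by independent means, and the means you sketch (tracking the $2$-adic valuation of $Q^{2^{n-5}}(x)-x$ via \eqref{eq:Q_feq_2x}, \eqref{eq:Q_feq_2xp1} and coalescence identities, and showing it equals exactly $n-1$) is precisely the hard content of the Bernstein--Lagarias theorem: Step~1 rules out a jump of more than one, but nothing in your argument rules out stagnation, i.e.\ that a maximal cycle merely splits at some level and the order stops growing. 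The known resolution is the criterion of Theorem~\ref{th:cycle_edp} (a cycle of $Q_m$ of length $2^k$, $k\ge 2$, contained in a cycle of $Q_{m+2}$ of length $2^{k+2}$, doubles forever), whose proof in \cite{Ber96} requires a genuinely new idea about the structure of the conjugacy map and is not a routine valuation count; with that criterion one finite verification (two levels beyond the base cycle) finishes the job, but without it, or an equivalent substitute, your Step~3 remains an unproven claim and the second assertion of the theorem is not established.
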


When lifting from $\mathbb{Z}/2^n\mathbb{Z}$ to $\mathbb{Z}/2^{n+1}\mathbb{Z}$, it is known that any cycle of $Q_n$  either splits into two cycles whose period is unchanged, or undergoes a period-doubling.
\begin{definition} \label{def:edp_cycle}
Let $m \geq k \geq 0$ and let $C= (c_{1}, \ldots, c_{2^k})$ be a cycle of the permutation $Q_m$ of length $2^k$. We say that $C$ has an \textit{ever-doubling period} if, for all $n \geq m$, the elements $c_1, \ldots, c_{2^k}$ of $C$ are all included in a single cycle of $Q_n$ of length $2^{n-m+k}$.
\end{definition}

The second part of Theorem \ref{th:Qn_cycles} is based on the fact that the cycle (5,17) of $Q_5$ has an ever-doubling period (see \cite{Ber96}).

Now we can use this result to study the dynamics of $Q$ and $Q^{-1}$ on the topological space $\mathbb Z_{2}$. To this aim, we need the notion of 2-adic {\em ball}.

\begin{definition} \label{def:ball}
For any $y \in \mathbb Z_{2}$ and $r \geq 0$, let $B(y,r)$ denote the (closed) {\em ball} $$B(y,r)=\left\lbrace x \in \mathbb Z_{2}: |x-y|_{2} \leq r \right\rbrace $$  with center $y$ and radius $r$. Equivalently, one has $$B(y,2^{-k}) = \left\lbrace x \in \mathbb Z_{2}: x \equiv y \pmod{2^{k}} \right\rbrace$$ for every integer $k \geq 0$, and its 2-adic measure is given by its radius: $$\mu\left( B(y,2^{-k})\right) = 2^{-k}.$$
\end{definition}

Recall that the function $Q$ is measure-preserving \cite{Ber96,Lag85} and, unlike the map $T$, is not ergodic on $\mathbb{Z}_2$, since it preserves the parity.

One may at first observe that all forward and backward orbits remain close to the initial point, and that the 2-adic distance is even smaller when the number of iterations is highly divisible by 2. This fact is illustrated in the table below that gives some of the iterates of the 2-adic integer $\overline{0110}1_2 = 1/5$.
{\renewcommand{\arraystretch}{1.5}
\begin{center}
\begin{tabular}{c|rc|rc}
$j$ & $Q^{j}\left( \frac{1}{5}\right)$ & $\left| Q^{j}\left( \frac{1}{5}\right) - \frac{1}{5}\right|_{2}$ &  $Q^{-j}\left( \frac{1}{5}\right)$ & $\left| Q^{-j}\left( \frac{1}{5}\right) - \frac{1}{5}\right|_{2}$\\
 \hline
 1 & $-\frac{1}{7}=\overline{001}_2$ & $2^{-2}$ & $\frac{13}{21}=\overline{001100}1_2$ & $2^{-2}$ \\
 2 & $\frac{17}{5}=\overline{0011}101_2$ & $2^{-4}$ & $-\frac{1}{11}=\overline{0001011101}_2$ & $2^{-4}$ \\
 3 & $\frac{1863}{31}=\ldots1001_2$ & $2^{-2}$ & $\frac{373}{781}=\ldots1001_2$ & $2^{-2}$ \\
 4 & $\ldots00001101_2$ & $2^{-6}$ & $\ldots10001101_2$ & $2^{-6}$ \\
 \end{tabular}
\end{center}
The previous observation is due to the congruences \eqref{eq:Q2} and \eqref{eq:Q2k}, which follow from Theorem \ref{th:Qn_cycles}.
\begin{corollary} \label{cor:Q_iter}
For all 2-adic integers $x$ and all $k \geq 2$,
\begin{align}
 Q^{2}(x) &\equiv x \pmod{2^4}, \label{eq:Q2}\\
 Q^{2^k}(x) &\equiv x \pmod{2^{k+4}} \label{eq:Q2k},
\end{align}
or equivalently, $ Q^{2}(x) \in B(x,2^{-4})$ and $Q^{2^k}(x) \in B(x,2^{-k-4})$.
\end{corollary}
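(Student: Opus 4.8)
The plan is to recast both congruences as divisibility statements about the order of the finite permutation $Q_n$, and then to invoke Theorem \ref{th:Qn_cycles}. By property \eqref{eq:Q_sol1} the map $Q$ descends to $Q_n$ on $\mathbb{Z}/2^n\mathbb{Z}$, so for every $x \in \mathbb Z_2$ and every integer $m \geq 0$ one has $Q^m(x) \equiv x \pmod{2^n}$ exactly when $Q_n^m = \mathrm{id}$, i.e. when the order of $Q_n$ divides $m$; the ``equivalently'' reformulations in terms of the balls $B(x,2^{-4})$ and $B(x,2^{-k-4})$ are then just a restatement via Definition \ref{def:ball}.

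Equation \eqref{eq:Q2k} becomes immediate: for $k \geq 2$ we have $n := k+4 \geq 6$, so Theorem \ref{th:Qn_cycles} says that $Q_{k+4}$ has order exactly $2^{(k+4)-4} = 2^k$; in particular this order divides $2^k$, whence $Q^{2^k}(x) \equiv x \pmod{2^{k+4}}$ for every $x \in \mathbb Z_2$. For \eqref{eq:Q2} I would first note that the canonical projection $\mathbb{Z}/2^{6}\mathbb{Z} \to \mathbb{Z}/2^{4}\mathbb{Z}$ intertwines $Q_6$ and $Q_4$, so $\mathrm{ord}(Q_4)$ divides $\mathrm{ord}(Q_6) = 2^{6-4} = 4$; since every cycle length of $Q_4$ is a power of $2$ by the first part of Theorem \ref{th:Qn_cycles}, this forces $\mathrm{ord}(Q_4) \in \{1,2,4\}$, and it only remains to exclude the value $4$, i.e. to show that $Q_4$ has no $4$-cycle. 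This last point I would settle by a finite computation: listing the parity vectors $V_4(0), V_4(1), \ldots, V_4(15)$ (equivalently, applying Lemma \ref{lem:inverse1} with $j=4$) one finds that $Q_4$ is the involution $(1\ 5)(9\ 13)(2\ 10)$, so $\mathrm{ord}(Q_4) = 2$ and therefore $Q^2(x) \equiv x \pmod{2^4}$.

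The only genuine obstacle is the case \eqref{eq:Q2}. The quantitative part of Theorem \ref{th:Qn_cycles} is stated only for $n \geq 6$, and the ``split-or-double'' behaviour of cycles under the lift $Q_n \to Q_{n+1}$ — which by itself gives only $\mathrm{ord}(Q_n) \le \mathrm{ord}(Q_{n+1}) \le 2\,\mathrm{ord}(Q_n)$ — narrows $\mathrm{ord}(Q_4)$ down to $\{2,4\}$ but cannot decide between them, so one explicit inspection of $Q_4$ (or of $Q_5$) appears unavoidable; fortunately it is just a check on $16$ residues. Everything else is a one-line consequence of the order formula of Bernstein and Lagarias.
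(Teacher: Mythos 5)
Your proposal is correct and follows essentially the same route as the paper: congruence \eqref{eq:Q2k} is deduced from the Bernstein--Lagarias order formula in Theorem \ref{th:Qn_cycles} applied to $Q_{k+4}$, and \eqref{eq:Q2} from an explicit finite check of $Q_4$, whose structure you identify exactly as the paper does (the involution with $2$-cycles $(1,5)$, $(2,10)$, $(9,13)$ and ten fixed points). The extra intermediate bound $\mathrm{ord}(Q_4)\mid\mathrm{ord}(Q_6)=4$ is harmless but unnecessary once the computation of $Q_4$ is carried out.
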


\begin{proof}
Take an integer $k \geq 2$. Theorem \ref{th:Qn_cycles} implies that the length of every cycle of $Q_{k+4}$ divides $2^k$, from which we infer the congruence \eqref{eq:Q2k}.

Likewise, the permutation $Q_4$ has ten fixed points and three cycles of length 2, which are (1,5), (2,10), and (9,13). Hence, the equation \eqref{eq:Q2}.
\end{proof}

Consequently, whatever the 2-adic integer $x$, its forward and backward orbits under iteration of  $Q$ have elements arbitrarily close to $x$.

\begin{corollary}
For all 2-adic integers $x$,
$$ \lim_{k\rightarrow \infty} Q^{2^k}(x) = \lim_{k\rightarrow \infty} Q^{-2^k}(x) = x.$$
\end{corollary}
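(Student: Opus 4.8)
The plan is to read this off directly from Corollary \ref{cor:Q_iter}, translating the congruences \eqref{eq:Q2} and \eqref{eq:Q2k} into statements about the 2-adic metric. For the forward orbit, fix $x \in \mathbb{Z}_2$ and take $k \geq 2$. The congruence \eqref{eq:Q2k} asserts $Q^{2^k}(x) \equiv x \pmod{2^{k+4}}$, which by Definition \ref{def:ball} is exactly $Q^{2^k}(x) \in B(x, 2^{-k-4})$, i.e. $\left| Q^{2^k}(x) - x \right|_2 \leq 2^{-k-4}$. Since the right-hand side tends to $0$ as $k \to \infty$, we obtain $\lim_{k \to \infty} Q^{2^k}(x) = x$.

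For the backward orbit I would first note that \eqref{eq:Q2k} is universally quantified over $\mathbb{Z}_2$, so it applies to $y := Q^{-2^k}(x)$ in place of $x$. This gives $x = Q^{2^k}(y) \equiv y \pmod{2^{k+4}}$, hence $Q^{-2^k}(x) \equiv x \pmod{2^{k+4}}$, i.e. $\left| Q^{-2^k}(x) - x \right|_2 \leq 2^{-k-4}$ as well. (Alternatively, by Theorem \ref{th:Qn_cycles} the permutation $Q_{k+4}$ has order dividing $2^k$, so both $Q^{2^k}$ and its inverse act as the identity on $\mathbb{Z}/2^{k+4}\mathbb{Z}$, which yields the same congruence.) Letting $k \to \infty$ then gives $\lim_{k \to \infty} Q^{-2^k}(x) = x$, completing the proof.

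There is no substantive obstacle here: the corollary is an immediate quantitative consequence of Corollary \ref{cor:Q_iter}. The only point deserving a word of care is the passage from the forward congruence to the backward one, which is justified simply because the estimate \eqref{eq:Q2k} holds for every $2$-adic integer, not just for $x$ itself; everything else is the routine identification of the $2$-adic ball $B(x, 2^{-k-4})$ with the residue class of $x$ modulo $2^{k+4}$.
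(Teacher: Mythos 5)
Your proof is correct and follows exactly the route the paper intends: the corollary is stated as an immediate consequence of Corollary \ref{cor:Q_iter}, and your translation of the congruence \eqref{eq:Q2k} into the bound $\left| Q^{\pm 2^k}(x) - x \right|_2 \leq 2^{-k-4}$ (with the backward case handled by applying \eqref{eq:Q2k} to $y = Q^{-2^k}(x)$, or equivalently via the order of $Q_{k+4}$) is precisely the intended argument.
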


Though the dynamics of $Q$ is not truly ergodic on $\mathbb Z_2$, this may occur on some invariant subsets.

Let us recall a known criterion for the ergodicity of non-expanding\footnote{In \cite{Ana06}, the term \textit{compatible} is used instead of non-expanding for the same meaning.} functions, by Anashin (Proposition 4.1 in \cite{Ana06}; see also \cite{Ana14}).
\begin{theorem} {\em (Anashin)} \label{th:ergodic}
A non-expanding function $F: \mathbb Z_2 \rightarrow \mathbb Z_2$ is ergodic if and only if $F$ induces modulo $2^n$ a permutation with a single cycle for all positive integers $n$.
\end{theorem}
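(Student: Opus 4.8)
The plan is to prove the two implications separately, after a preliminary reduction. Write $F_n$ for the map induced by $F$ on $\mathbb Z/2^n\mathbb Z$, which is well defined exactly because $F$ is non-expanding, and note that $F$ then maps the ball $B(c,2^{-n})$ into $B(F_n(c),2^{-n})$. The combinatorial objects in play are the partitions $\mathcal P_n$ of $\mathbb Z_2$ into the $2^n$ balls of radius $2^{-n}$, each of measure $2^{-n}$; their union generates the Borel $\sigma$-algebra, and $F$ ``covers'' the action of $F_n$ on $\mathcal P_n$.

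\emph{Preliminary reduction.} First I would show that, for non-expanding $F$, being measure-preserving is equivalent to each $F_n$ being a permutation, equivalently to $F$ being a bijection, and that in this case $F^{-1}$ is also non-expanding. If $F_n$ misses a residue $c$, then $F(\mathbb Z_2)$ avoids $B(c,2^{-n})$, so $1=\mu(F^{-1}(F(\mathbb Z_2)))=\mu(F(\mathbb Z_2))\le 1-2^{-n}$, contradicting measure-preservation. Conversely, if every $F_n$ is a permutation then $F$ is injective (distinct points are separated mod some $2^n$) and surjective (the nested nonempty compact sets $F^{-1}(B(c,2^{-n}))$ have a common point), hence bijective; then each $F_n$ is a surjection of a finite set, so injective, which forces $F^{-1}$ to be non-expanding; and then $F$ and $F^{-1}$ both carry balls of $\mathcal P_n$ exactly onto balls of $\mathcal P_n$, so $\mu\circ F^{-1}$ agrees with $\mu$ on the semi-algebra of balls and hence on all Borel sets. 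After this, the content of the theorem is: $F$ (a measure-preserving bijection) is ergodic iff every $F_n$ is a single $2^n$-cycle.

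\emph{Single cycles $\Rightarrow$ ergodic.} I would take a Borel set $A$ with $F^{-1}(A)=A$ up to a null set and show $\mu(A)\in\{0,1\}$. Measure-preservation and bijectivity give $\mu(A\cap FB)=\mu(A\cap B)$ for every ball $B$. If $F_n$ is a single $2^n$-cycle, then $B,FB,F^2B,\dots$ runs through all of $\mathcal P_n$, so the $2^n$ numbers $\mu(A\cap B')$, $B'\in\mathcal P_n$, are all equal; since they sum to $\mu(A)$, each equals $2^{-n}\mu(A)$. Thus the step function equal to $\mu(A\cap B')/\mu(B')$ on each $B'\in\mathcal P_n$ is the constant $\mu(A)$ for every $n$; letting $n\to\infty$, this sequence converges almost everywhere to $\mathbf 1_A$ (martingale convergence, i.e. Lebesgue differentiation along the balls of $\mathbb Z_2$), so $\mathbf 1_A=\mu(A)$ a.e. and $\mu(A)$ is $0$ or $1$. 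For the converse, I would argue contrapositively: if some $F_n$ has at least two cycles, pick one cycle $\mathcal O\subsetneq\mathbb Z/2^n\mathbb Z$ and put $A=\bigcup_{c\in\mathcal O}B(c,2^{-n})$, so $0<\mu(A)=|\mathcal O|\,2^{-n}<1$; since $F^{-1}(B(c,2^{-n}))=B((F_n)^{-1}(c),2^{-n})$ (both inclusions coming from $F$, $F^{-1}$ non-expanding) and $(F_n)^{-1}(\mathcal O)=\mathcal O$, we get $F^{-1}(A)=A$ exactly, so $A$ is a nontrivial invariant set and $F$ is not ergodic.

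The main obstacles I expect are, first, the preliminary equivalence---one must check that measure-preservation genuinely forces $F$ to \emph{permute} (not merely map into) the finite quotients, which is what makes $F^{-1}$ non-expanding and balls map exactly onto balls---and, second, the implication ``the $\mathcal P_n$-averages of $\mathbf 1_A$ are all constant $\Rightarrow\mathbf 1_A$ is constant,'' which is the one genuinely analytic step and uses martingale convergence (equivalently the Lebesgue density theorem for the net of balls in $\mathbb Z_2$). The remaining work is bookkeeping with the cycle structure of the permutations $F_n$.
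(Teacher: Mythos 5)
Your proof is correct. Note, however, that the paper itself does not prove this statement: it is quoted as Anashin's criterion (Proposition 4.1 of the cited reference) and used as a black box, so there is no internal argument to compare against; what you give is essentially the standard proof behind that criterion. Your preliminary reduction (for non-expanding $F$: measure-preserving $\Leftrightarrow$ each $F_n$ a permutation $\Leftrightarrow$ $F$ a bijection with non-expanding inverse, whence $F$ maps each ball of radius $2^{-n}$ exactly onto a ball of radius $2^{-n}$) is sound and is the right way to make the statement precise; in particular you correctly treat ``ergodic'' as a property of measure-preserving transformations, as the paper does explicitly just before the theorem --- without that convention the forward implication would fail (e.g.\ for a constant map, whose only exactly invariant sets are $\emptyset$ and $\mathbb Z_2$). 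Both implications are then handled correctly: from a proper cycle of some $F_n$ you build an exactly invariant union of balls of measure strictly between $0$ and $1$, and in the converse direction you identify the one genuinely analytic step, namely that the $\mathcal P_n$-conditional expectations of $\mathbf 1_A$ converge a.e.\ to $\mathbf 1_A$ (martingale convergence, equivalently Lebesgue density along 2-adic balls), which together with the computed constant value forces $\mu(A)\in\{0,1\}$. I see no gaps.
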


The next theorem shows that $Q$ is ergodic in a neighborhood of each cycle of $Q_m$ having an ever-doubling period, for any $m \geq 0$.

\begin{theorem} \label{th:Q_ergodic}
Let $Q_m$ denote the permutation induced by $Q$ in $\mathbb{Z}/2^m\mathbb{Z}$. For all $m \geq k \geq 0$ and all cycles $C= (c_{1}, \ldots, c_{2^k})$ of $Q_m$ having an ever-doubling period, the restriction of $Q$ to $B(c_1, 2^{-m}) \cup \ldots \cup B(c_{2^k}, 2^{-m})$ is ergodic.
\end{theorem}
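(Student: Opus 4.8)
The plan is to reduce the statement to Anashin's criterion (Theorem~\ref{th:ergodic}) applied to a faithful copy of $Q|_X$ living on $\mathbb{Z}_2$, where $X = B(c_1,2^{-m})\cup\dots\cup B(c_{2^k},2^{-m})$. First I would record that $X$ is $Q$-invariant: since $Q$ is a $2$-adic isometry and $Q_m$ cyclically permutes $c_1,\dots,c_{2^k}$, we have $Q(B(c_i,2^{-m})) = B(c_{i+1\bmod 2^k},2^{-m})$, hence $Q(X)=X$. The real content is to understand, for each $n\ge m$, how $Q$ permutes the balls of radius $2^{-n}$ contained in $X$. There are exactly $2^{n-m}$ such balls inside each $B(c_i,2^{-m})$, hence $2^{n-m+k}$ in $X$, and they are precisely the residues modulo $2^n$ that reduce modulo $2^m$ into $\{c_1,\dots,c_{2^k}\}$. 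By the ever-doubling hypothesis (Definition~\ref{def:edp_cycle}), $c_1$ lies in a cycle of $Q_n$ of length $2^{n-m+k}$; this cycle is contained in $X$ because $X$ is $Q$-invariant, and it has exactly the cardinality of the family of radius-$2^{-n}$ balls of $X$, so it \emph{is} that family. Therefore $Q$ acts on the $2^{n-m+k}$ balls of radius $2^{-n}$ in $X$ as a single cycle, for every $n\ge m$.

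Next I would assemble a filtration of $X$ indexed by $\mathbb{N}$ matching the standard one on $\mathbb{Z}_2$. Write $B_i = B(c_i,2^{-m})$ in the cyclic order so that $Q(B_i)=B_{i+1\bmod 2^k}$; for $0\le N<k$ group the $B_i$ into $2^N$ blocks according to $i\bmod 2^N$, and combine these with the partitions into balls of radius $2^{-n}$ for $n\ge m$. After the reindexing $N\leftrightarrow n=N+m-k$ this yields an increasing sequence of partitions $\tilde{\mathcal P}_0\prec\tilde{\mathcal P}_1\prec\cdots$ of $X$, with $2^N$ atoms at level $N$, all refining one another, on each of which $Q$ induces a single $2^N$-cycle. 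Picking nested base atoms $\beta_0\supseteq\beta_1\supseteq\cdots$ with $\beta_N$ an atom of $\tilde{\mathcal P}_N$, and defining, for $x\in\mathbb{Z}_2$, $\phi(x)$ to be the unique point of $\bigcap_{N\ge 0} Q^{\,x\bmod 2^N}(\beta_N)$, one checks that $\phi\colon\mathbb{Z}_2\to X$ is a bijection carrying the class of $x$ modulo $2^N$ onto an atom of $\tilde{\mathcal P}_N$ (so it is measure-preserving, every level-$N$ atom carrying normalised mass $2^{-N}$) and satisfying $Q\circ\phi=\phi\circ(x\mapsto x+1)$. Thus $Q|_X$ is conjugate, via a measure isomorphism, to the $2$-adic odometer $x\mapsto x+1$ on $\mathbb{Z}_2$, which is ergodic — directly by Theorem~\ref{th:ergodic}, since it induces a single cycle modulo $2^N$ for every $N$. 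Transporting back through $\phi$ shows that $Q|_X$ is ergodic on $X$ equipped with the (renormalised) $2$-adic measure.

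The step I expect to be the main obstacle is the combinatorial heart of the first paragraph: proving cleanly that the single $Q_n$-cycle furnished by the ever-doubling hypothesis coincides \emph{exactly} with the set of radius-$2^{-n}$ balls inside $X$ — this is where the hypothesis is genuinely consumed, and it hinges on the cardinality count together with $Q$-invariance of $X$. A secondary nuisance is the bookkeeping of the downward extension $\tilde{\mathcal P}_0\prec\cdots\prec\tilde{\mathcal P}_{k-1}\prec\mathcal P_m\prec\cdots$ and of $\phi$, which must be arranged so that congruence modulo $2^N$ on $\mathbb{Z}_2$ corresponds precisely to ``same atom of $\tilde{\mathcal P}_N$'' on $X$. (One may instead bypass $\phi$ altogether: given a $Q$-invariant $A\subseteq X$, the conditional densities $x\mapsto \mu(A\cap\beta_n(x))/\mu(\beta_n(x))$, where $\beta_n(x)$ is the radius-$2^{-n}$ ball of $x$, are $Q$-invariant and constant on the atoms on which $Q$ acts transitively, hence constant equal to $\mu(A)/\mu(X)$; letting $n\to\infty$ forces $\mathbf 1_A=\mu(A)/\mu(X)$ almost everywhere, so $\mu(A)\in\{0,\mu(X)\}$.)
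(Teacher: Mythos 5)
Your proposal is correct and follows essentially the same route as the paper: the identical key step is that $Q$-invariance of $X$ together with the ever-doubling hypothesis and the cardinality count $2^{n-m+k}$ forces $Q_n$ to act as a single cycle on the residues of $X$ modulo $2^n$ for every $n \geq m$, after which ergodicity is obtained from Anashin's criterion (Theorem \ref{th:ergodic}). The only divergence is in the transfer to $\mathbb{Z}_2$: the paper simply exhibits the explicit bijection $F(x) = c_{i+1} + 2^{m-k}(x-i)$ with $i = x \bmod 2^k$ and applies Theorem \ref{th:ergodic} to $F^{-1}\circ Q\circ F$, whereas your odometer conjugacy $\phi$ (or the alternative conditional-density argument) accomplishes the same step with heavier, but valid, bookkeeping.
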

\begin{proof}
Let $n \geq m$. Put $K=B(c_1, 2^{-m}) \cup \ldots \cup B(c_{2^k}, 2^{-m})$ and $K_n=K \mod 2^n$. Since $Q$ is isometric, the sets $K$ and $K_n$ are left invariant by $Q$ and $Q_n$ respectively.

Let $C_n$ be the cycle of the permutation $Q_n$ that contains all the elements of $C$. Its length is equal to $2^{n-m+k}$. Moreover, it is included in $K_n$ whose cardinality is equal to $2^{n-m+k}$. Therefore the restriction of $Q_n$ to the set $K_n$ is a permutation with a single cycle $C_n$. 

From Theorem \ref{th:ergodic}, we deduce that the restriction of $Q$ to the set $K$ is ergodic. For completeness, it is not difficult to find a suitable bijection $F: \mathbb Z_2 \rightarrow K$ for which the conjugate function $F^{-1} \circ Q \circ F$ acting on $\mathbb Z_2$ is non-expanding and ergodic. For example, one may use the function
$$ \begin{array}{rl}F:  \mathbb Z_2 & \longrightarrow K\\
 x & \longmapsto c_{i+1} + 2^{m-k} (x-i), \quad \text{where }i = x \mod 2^k,
 \end{array}$$
 which is one-to-one and onto.
\end{proof}

\begin{definition} \label{def:ergodic}
Whenever $Q$ is ergodic on a (closed) invariant set with positive measure, we call it an {\em ergodic set}. Moreover, we call {\em ergodic domain} the union of all the ergodic sets.
\end{definition}
The fact that $Q$ is bijective further implies that $Q^{-1}$, namely, the $3x+1$ conjugacy map, has the same ergodic domain as $Q$.

\begin{table}
\caption{Odd cycles $C$ of $Q_m$ of length $2^k$ and having an ever-doubling period for $0 \leq m-k \leq 6$. The last column gives the 2-adic measure, equal to $2^{k-m}$, of their $\omega$-limit sets for the function $Q$.}
\label{tab:ergodic}
\begin{center}
\begin{tabular}{|c|c|c|c|}
\hline
$m$ & $k$ & $C$ & $\mu\left( \omega(C) \right)$  \\
\hline
\hline
5 & 1 & (5,17) & $2^{-4}$ \\
\hline
6 & 2 & (9, 29, 25, 13) & $2^{-4}$ \\
\hline
6 & 2 & (41, 61, 57, 45) & $2^{-4}$ \\
\hline
8 & 2 & (27, 251, 219, 59) & $2^{-6}$ \\
\hline
8 & 2 & (91, 187, 155, 123) & $2^{-6}$ \\
\hline
\end{tabular}
\end{center}
\end{table}

Let us point out that every ergodic set in Theorem \ref{th:Q_ergodic} is closed, hence it is the $\omega$-limit set in $\mathbb Z_{2}$ of any point of the associated cycle $C$. For convenience, we write $\omega(C)$ to refer to this set.

In order to identify the cycles having an ever-doubling period, it is convenient to use the following criterion (see Theorem 3.1 in \cite{Ber96}) whose original formulation and vocabulary have been significantly modified.

\begin{theorem} {\em (Bernstein, Lagarias)} \label{th:cycle_edp}
Let $m \geq k \geq 2$ and let $C$ be a cycle of $Q_m$ of length $2^k$. If $C$ is part of a cycle of $Q_{m+2}$ of length $2^{k+2}$, then $C$ has an ever-doubling period.
\end{theorem}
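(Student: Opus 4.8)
The plan is to reduce the statement to a question about the $2$-adic valuations of the iterates of a single isometry — the return map of $Q$ along the cycle — and then close the argument by a short induction, the inductive step resting on one rigidity property of $Q$ that is the arithmetic core of the result.

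\emph{Step 1: the return map.} Because $Q$ is a $2$-adic isometry, $Q^{2^{k}}$ leaves each ball $B(c_{i},2^{-m})$ invariant, so there is a $2$-adic isometry $R\colon\mathbb Z_{2}\to\mathbb Z_{2}$ with $Q^{2^{k}}(c_{1}+2^{m}t)=c_{1}+2^{m}R(t)$, and hence $Q^{2^{k+j}}(c_{1})=c_{1}+2^{m}R^{2^{j}}(0)$ for every $j\ge 0$. Using Theorem \ref{th:Qn_cycles} (cycle lengths are powers of $2$, and upon lifting from level $n$ to level $n+1$ a cycle either keeps its length or has it doubled), one checks that $C$ has an ever-doubling period in the sense of Definition \ref{def:edp_cycle} if and only if $v_{2}\bigl(R^{2^{j}}(0)\bigr)=j$ for all $j\ge 0$; indeed, as long as the cycle length through level $m+j$ equals $2^{k+j}$, it doubles once more exactly when $Q^{2^{k+j}}(c_{1})\not\equiv c_{1}\pmod{2^{m+j+1}}$, i.e.\ exactly when $v_{2}(R^{2^{j}}(0))=j$, and the converse implication follows by induction on $j$. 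The same dictionary turns the two hypotheses that $C$ is a cycle of length $2^{k}$ in $Q_{m}$ and that $C$ lies in a cycle of length $2^{k+2}$ in $Q_{m+2}$ into $v_{2}(R(0))=0$ and $v_{2}(R^{2}(0))=1$; note that $v_{2}(R^{4}(0))\ge 2$ then holds automatically, since $R$ is an isometry and $v_{2}(R^{2}(0))=1$. So it suffices to prove: if $R$ is the return isometry of a $Q$-cycle and $v_{2}(R(0))=0$, $v_{2}(R^{2}(0))=1$, then $v_{2}\bigl(R^{2^{j}}(0)\bigr)=j$ for all $j$.

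\emph{Step 2: the local affine shape of $R$, and the induction.} The elementary identity $T^{l}(z+2^{l}t)=T^{l}(z)+3^{w}t$ (with $w$ the number of odd terms among $z,T(z),\dots,T^{l-1}(z)$), which is exactly the relation behind Lemma \ref{lem:inverse1}, yields $Q(z+\delta)-Q(z)=2^{v_{2}(\delta)}\bigl(Q\bigl(T^{v_{2}(\delta)}(z)+3^{w}\,\delta/2^{v_{2}(\delta)}\bigr)-Q\bigl(T^{v_{2}(\delta)}(z)\bigr)\bigr)$. Composing this around the cycle shows that, modulo any prescribed power $2^{N}$, $R$ coincides with an affine map $t\mapsto\mu t+\nu$ whose multiplier $\mu$ is a $2$-adic unit of the form $\pm 3^{\Sigma}$ (an accumulated power of $3$ along the orbit) and, crucially, is unchanged when the cycle is doubled; then $v_{2}(\nu)=v_{2}(R(0))$, and the lifting-the-exponent lemma gives $v_{2}(R^{2^{j}}(0))=v_{2}(\nu)-v_{2}(\mu-1)+v_{2}(\mu^{2^{j}}-1)$ with $v_{2}(\mu^{2^{j}}-1)=v_{2}(\mu-1)+j$ for $j\ge 1$ as soon as $\mu\equiv 1\pmod 4$. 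Feeding in the hypotheses: $v_{2}(R(0))=0$ gives $v_{2}(\nu)=0$; then $v_{2}(R^{2}(0))=v_{2}(\nu)+v_{2}(\mu+1)=1$ forces $v_{2}(\mu+1)=1$, i.e.\ $\mu\equiv 1\pmod 4$ (had $\mu\equiv 3\pmod 4$, the valuations $v_{2}(R^{2^{j}}(0))$ would outrun $j$, which is just the cycle eventually splitting); hence $v_{2}(R^{2^{j}}(0))=0-v_{2}(\mu-1)+(v_{2}(\mu-1)+j)=j$ for all $j\ge 1$, and $j=0$ is the hypothesis. Equivalently, bare-handed, once the cycle has doubled twice the multiplier lies in $1+4\mathbb Z_{2}$, and the induction step $R^{2^{j+1}}(0)=R^{2^{j}}\!\bigl(R^{2^{j}}(0)\bigr)$ is controlled by $R^{2^{j}}(t)\equiv \mu^{2^{j}}t+\nu\pmod{2^{j+2}}$ with $\mu^{2^{j}}\equiv 1\pmod 4$ for $j\ge 1$, giving $v_{2}(R^{2^{j+1}}(0))=j+1$.

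\emph{The main obstacle.} The hard part is Step 2: establishing that the return isometry $R$ is affine modulo powers of $2$ with a multiplier that is a controlled power of $3$ — equivalently, that $v_{2}(R^{2^{j+1}}(0))$ equals $j+1$ \emph{exactly} rather than being larger. The isometry property alone yields only the inequality $v_{2}(R^{2^{j+1}}(0))\ge j+1$, and even Corollary \ref{cor:Q_iter} does not suffice once $m-k$ is large; obtaining equality genuinely uses the $3$-power structure of the $3x+1$ map and is precisely the content of Bernstein and Lagarias's Theorem 3.1 in \cite{Ber96}. A self-contained treatment would reproduce their valuation bookkeeping for the rational numbers attached to the cycle; otherwise one simply invokes their theorem after checking that the reformulation of Step 1 matches their hypotheses and conclusion.
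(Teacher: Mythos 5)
First, compare against what the paper actually does with this statement: it gives no proof at all, importing the result as Theorem 3.1 of \cite{Ber96} with ``significantly modified'' formulation and vocabulary. So the only argument available at the level of this paper is a translation of vocabulary plus the citation. Your Step 1 supplies most of that translation, and it is correct: since $Q$ is an isometry the return map $R$ on $B(c_1,2^{-m})$ is well defined, $Q^{2^{k+j}}(c_1)=c_1+2^{m}R^{2^{j}}(0)$, the ever-doubling property is equivalent to $v_2\bigl(R^{2^{j}}(0)\bigr)=j$ for all $j\ge 0$, and the hypothesis on $Q_{m+2}$ is equivalent to $v_2(R(0))=0$ together with $v_2(R^{2}(0))=1$; moreover $Q_n$ and $Q_n^{-1}$ have identical cycle structure, so the fact that \cite{Ber96} states everything for the conjugacy map $\Phi=Q^{-1}$ is harmless. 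If your intent is the second option of your closing paragraph --- check this dictionary and invoke \cite{Ber96} --- then you are doing exactly what the paper does, except that the final matching against the actual statement of their Theorem 3.1 is announced rather than carried out.

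Read as a self-contained proof, however, Step 2 contains a genuine gap, and it sits exactly where all the content of the theorem is. The claim that ``composing the identity around the cycle'' shows $R$ to agree modulo every $2^{N}$ with an affine map $t\mapsto\mu t+\nu$, with $\mu=\pm 3^{\Sigma}$ unchanged under doubling, is asserted, not derived: the identity $Q(z+\delta)-Q(z)=2^{v_2(\delta)}\bigl(Q\bigl(T^{v_2(\delta)}(z)+3^{w}\delta/2^{v_2(\delta)}\bigr)-Q\bigl(T^{v_2(\delta)}(z)\bigr)\bigr)$ (which is indeed correct) only converts one increment of $Q$ into another increment of $Q$ at a shifted basepoint; it linearizes nothing (recall $Q$ is nowhere differentiable, \S\ref{sub:cycles}), and iterating it $2^{k}$ times around the cycle produces nested $Q$-increments, not an affine map with a controlled multiplier. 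Pinning down the two bits above the leading bit of $R^{2^{j}}(0)$ is precisely the valuation bookkeeping of Bernstein and Lagarias, which you do not reproduce. Nor can the conclusion be rescued from Step 1 plus the isometry property alone: a $2$-adic isometry with $v_2(R(0))=0$ and $v_2(R^{2}(0))=1$ may well have $v_2(R^{4}(0))\ge 3$ --- for instance, take the $4$-cycle $0\to1\to2\to3\to0$ modulo $4$ and lift it to the two $4$-cycles $0\to1\to2\to3\to0$ and $4\to5\to6\to7\to4$ modulo $8$, extending compatibly to all higher levels --- so the theorem is genuinely a statement about the $3$-power structure of $Q$, as you yourself acknowledge in your last paragraph. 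In short: the reduction is right, the fallback to the citation coincides with the paper's treatment, but there is no independent proof here; if Step 2 is meant to stand on its own, you must actually carry out the \cite{Ber96} computation rather than assert its conclusion.
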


\begin{table}
\caption{Numbers $N_k$ of odd ergodic sets of 2-adic measure $2^{-k}$, $k \leq 16$.}
\label{tab:nb_ergodic}
\begin{center}
\begin{tabular}{cc}
\begin{tabular}{|c|c|c|}
\hline
$k$ & $N_k$ & $N_k \times 2^{-k}$  \\
\hline
\hline
1 & 0 & 0.000 \\
\hline
2 & 0 & 0.000 \\
\hline
3 & 0 & 0.000 \\
\hline
4 & 3 & 0.187 \\
\hline
5 & 0 & 0.000 \\
\hline
6 & 2 & 0.031 \\
\hline
7 & 10 & 0.078 \\
\hline
8 & 11 & 0.042 \\
\hline
\end{tabular}

\begin{tabular}{|c|c|c|}
\hline
$k$ & $N_k$ & $N_k \times 2^{-k}$  \\
\hline
\hline
9 & 11 & 0.021 \\
\hline
10 & 29 & 0.028 \\
\hline
11 & 54 & 0.026 \\
\hline
12 & 91 & 0.022 \\
\hline
13 & 118 & 0.014 \\
\hline
14 & 213 & 0.013 \\
\hline
15 & 282 & 0.008 \\
\hline
16 & 436 & 0.006 \\
\hline
\end{tabular}

\end{tabular}
\end{center}
\end{table}

After some straightforward numerical computations, we find, by applying Theorems \ref{th:Q_ergodic} and \ref{th:cycle_edp} above, that $Q$ is ergodic on the $\omega$-limit set of every cycle in Table \ref{tab:ergodic}. Though it is not the case for any other odd set of 2-adic measure at least $2^{-6}$, there are many smaller odd ergodic sets. In Table \ref{tab:nb_ergodic}, we provide their respective numbers when sorted by size (see also Table 2.2 in \cite{Ber96}). We obtain that the total measure of the odd ergodic sets exceeds 0.48, whereas it is trivially upper bounded by $\mu \left(B(1, 1/2) \right) = 1/2$.

For each odd ergodic set of measure $2^{-k}$ and each $m \geq 1$, we easily get, by applying the autoconjugacy \eqref{eq:Q_feq_2x} repeatedly $m$ times, an even ergodic set of measure $2^{-k-m}$. It yields that the ergodic domain has a measure greater than 0.96.

\begin{conjecture} {\em (Ergodicity Conjecture)}
The ergodic domain of $Q$ has full 2-adic measure.
\end{conjecture}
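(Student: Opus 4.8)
The plan is to recast the conjecture as a statement about the combinatorial tree formed by the cycles of the permutations $Q_n$, and then to show that the set of $2$-adic integers not captured by any ergodic set is null. By Definition \ref{def:edp_cycle} and Theorem \ref{th:Q_ergodic} (together with the remarks following it), a point $x$ belongs to the ergodic domain precisely when the nested sequence of cycles $C_n(x)$ of $Q_n$ containing $x \bmod 2^n$ is eventually \emph{purely period-doubling}: there is an $m$ such that for every $n \ge m$ the lift from $Q_n$ to $Q_{n+1}$ doubles the period of $C_n(x)$ instead of splitting it into two cycles. Writing $A_n$ for the set of $x$ at which the passage from level $n$ to level $n+1$ is a split, the non-ergodic set is then exactly $\mathcal N = \limsup_n A_n$, and the target is $\mu(\mathcal N)=0$. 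A first reduction comes from Theorem \ref{th:cycle_edp}: once $|C_n(x)| \ge 4$, two consecutive doublings (from level $n$ to level $n+2$) force $x$ into the ergodic domain. Hence, past the level at which its cycle first reaches length $4$, any $x \in \mathcal N$ must have at least one split in every block of two consecutive levels, so $|C_n(x)|$ grows no faster than $2^{n/2}$ up to a constant; in particular $\mathcal N$ misses the long cycles guaranteed by Theorem \ref{th:Qn_cycles}.

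The real difficulty is to pass from ``thin cycle at every scale'' to ``measure zero''. A direct Borel--Cantelli estimate is unavailable, because $\mu(A_n)$ need not tend to $0$: Theorem \ref{th:Qn_cycles} only guarantees one cycle of maximal length $2^{n-4}$, covering measure $2^{-4}$, while a priori a positive fraction of $\mathbb Z/2^n\mathbb Z$ can sit in short, repeatedly splitting cycles. The natural substitute is a conditional estimate in the spirit of L\'evy's $0$--$1$ law: one would try to prove that there is an absolute constant $c>0$ such that, conditioned on the observed history $C_0(x),\dots,C_n(x)$, the probability that $x$ eventually enters the ergodic domain is at least $c$, uniformly in $n$ and in that history. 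L\'evy's law would then promote this conditional probability to $1$ almost everywhere, i.e. $\mu(\mathcal N)=0$. Equivalently: from every cycle configuration of $Q_m$, the expected number of further levels before an ever-doubling cycle appears along the branch through $x$ should be finite.

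The main obstacle is exactly this uniform lower bound on the rate of period-doubling. Whether the lift of a cycle $C$ of $Q_n$ doubles or splits is decided by an arithmetic condition of the shape $\Sigma_C \equiv 0$ versus $\Sigma_C \equiv 2^{\,n} \pmod{2^{\,n+1}}$ for a certain sum $\Sigma_C$ attached to $C$ --- the same carry mechanism that yields the exponent $2^k$ in the functional equation \eqref{eq:Q_feq_2xp1} and, more generally, the coalescences of \S\ref{sub:feq}. There is at present no known way to show that this condition is ``random enough'' --- doubling probability bounded away from $0$, correlations across scales weak enough --- to run the martingale argument, and making it rigorous appears to be of the same depth as the Ergodicity Conjecture itself. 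In the absence of such input, the realistic intermediate targets are: (i) extend the computations behind Tables \ref{tab:ergodic} and \ref{tab:nb_ergodic} and, via Theorem \ref{th:cycle_edp}, certify further ergodic sets so as to lower the current bound on $\mu(\mathcal N)$; (ii) establish the single-level version, namely a uniform lower bound on the probability that a given cycle of $Q_n$ doubles rather than splits, which is already nontrivial and is the natural first ingredient of the conditional estimate; and (iii) exploit the autoconjugacy $Q(2x)=2Q(x)$ from \eqref{eq:Q_feq_2x} --- under which $Q$ restricted to $2\mathbb Z_2$ is conjugate to $Q$ on $\mathbb Z_2$, and which already carries odd ergodic sets to even ones --- to set up a self-similar renormalisation of the cycle tree in which ``full measure'' becomes a self-consistent fixed point.
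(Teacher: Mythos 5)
The statement you are asked about is a \emph{conjecture}: the paper offers no proof of it, only supporting evidence (the certified ergodic sets of Tables \ref{tab:ergodic} and \ref{tab:nb_ergodic}, and the resulting lower bound of about $0.96$ on the measure of the ergodic domain). Your text is likewise not a proof but a proof \emph{strategy}, and you say so yourself: the central step --- a uniform lower bound, conditional on the cycle history, for the probability that a cycle of $Q_n$ period-doubles rather than splits, strong enough to run a L\'evy/martingale or Borel--Cantelli-type argument --- is exactly the missing ingredient, and you correctly note that establishing it seems to be of the same depth as the conjecture itself. So the genuine gap is the one you name: nothing in the paper (Theorem \ref{th:Qn_cycles} gives only a single maximal cycle of measure $2^{-4}$, and Theorem \ref{th:cycle_edp} only converts two observed doublings into an ever-doubling certificate) controls how often splits occur along a typical branch, and without that quantitative input the limsup set $\mathcal N$ cannot be shown to be null.

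A secondary gap worth flagging: your opening reduction asserts that $x$ lies in the ergodic domain \emph{precisely} when its branch of cycles is eventually purely period-doubling. The ``if'' direction is Theorem \ref{th:Q_ergodic} combined with Definition \ref{def:edp_cycle}. But the ``only if'' direction is not automatic from the paper: Definition \ref{def:ergodic} takes the ergodic domain to be the union of \emph{all} closed invariant sets of positive measure on which $Q$ is ergodic, and such a set need not be a finite union of balls sitting over an ever-doubling cycle (a closed positive-measure subset of $\mathbb Z_2$ need not contain any ball). To make your reformulation $\mu(\mathcal N)=0$ genuinely equivalent to the conjecture you would have to show that every ergodic set is, up to a null set, covered by sets of the form treated in Theorem \ref{th:Q_ergodic}, e.g.\ via a density-point argument together with Anashin's criterion (Theorem \ref{th:ergodic}) applied to a suitable conjugation onto $\mathbb Z_2$. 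Your intermediate targets (extending the computations behind the tables, a single-level doubling bound, and the renormalisation via $Q(2x)=2Q(x)$) are reasonable directions, but none of them is carried out here, so the conjecture remains open as stated.
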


We expect this conjecture to be closely related to the distribution of periodic orbits, about which little is known.

\subsection{Cycles}
\label{sub:cycles}

The search of the periodic points of the function $Q$ is far from trivial due to the fact that it is nowhere differentiable, as proved by M\"uller in \cite{Mul91} (see also \cite{Ber94} for a short proof).

Hereafter, we call \emph{$Q$-cycle} (resp. \emph{$T$-cycle}) a periodic  orbit of the function $Q$ (resp. $T$). Unlike $T$-cycles, it is not known whether the $Q$-cycles are all rational. Note that the set of even $Q$-cycles may be easily deduced from the odd ones by using  the functional equation \eqref{eq:Q_feq_2x}, and by adding the fixed point $0$. 

As a consequence of Theorem \ref{th:Qn_cycles} (\S\ref{sub:erg}), the period of any $Q$-cycle is always a power of 2. In contrast with the cycles having an ever-doubling period introduced in Definition \ref{def:edp_cycle} (\S\ref{sub:erg}), a $Q$-cycle corresponds to some cycle of $Q_n$ whose period remains unchanged for all sufficiently large $n$, and that systematically splits into two  cycles of $Q_{n+1}$, one of which splits again, and so on as $n$ increases. From a heuristic point of view, this resembles an infinite branching process that allows one to estimate the number of short cycles of $Q_n$ for large $n$, as in \cite[\S 6]{Ber96}.

One observes in Example \ref{ex:inverse_2adic} (\S\ref{sub:parseq}) that $-1$ is a fixed point for the function $Q$, as for $T$. In fact, there are infinitely many since $-2^{k}$ and $2^{k}/3$ are fixed points for all $k \geq 0$, in addition to the trivial fixed point 0. It is conjectured  that $-1$ and $1/3$ are the only odd ones (Fixed Point Conjecture, in \cite{Ber96}). Numerically, it is easy to verify that any such point is necessarily very close, if not equal, to $-1$ or $1/3$.

In the same paper, Bernstein and Lagarias also mentioned the existence of the odd rational cycle $(-1/3, 1)$ of period 2, and conjectured that there are finitely many odd cycles for any given period $2^j$ ($3x+1$ Conjugacy Finiteness Conjecture).

Lately, I found that $(-1/5, 5/7)$ is another odd rational cycle of period 2. Indeed, 
$$Q\left( -\frac{1}{5}\right) = \overline{001}1_2 = \frac{5}{7} \quad \text{and} \quad Q\left( \frac{5}{7}\right) = \overline{0011}_2 = -\frac{1}{5}.$$
Next, I conducted a numerical verification up to period 16 on the set of rationals of the form $p/q$ where $p,q$ are odd coprime integers lower than 1000 in absolute value. Working modulo $2^{40}$ was enough to rule out the candidates that are not part of a known $Q$-cycle.

\begin{table}
\caption{Odd $Q$-cycles and corresponding $T$-cycles.}
\label{tab:cycles}
\begin{center}
\begin{tabular}{|c|c|c|}
\hline
Period & $Q$-cycle & $T$-cycles \\
\hline
\hline
1 & $(-1)$ = $\left( \overline{1}_2\right) $ & $(-1)$ \\
 & $\left( \frac{1}{3} \right) = \left(\overline{01}1_2\right)$ & $(1,2)$ \\
\hline
2 & $\left( -\frac{1}{3} , 1\right) = \left(\overline{01}_2,\overline{0}1_2\right)$ & $(0)$ and $(1,2)$ \\
 & $\left( -\frac{1}{5} , \frac{5}{7}\right) = \left(\overline{0011}_2, \overline{001}1_2\right)$ & $\left( \frac{1}{5}, \frac{4}{5}, \frac{2}{5} \right) $ and $\left( \frac{5}{7}, \frac{11}{7}, \frac{20}{7}, \frac{10}{7}\right)$ \\
\hline
\end{tabular}
\end{center}
\end{table}

In Table \ref{tab:cycles}, we list the known odd $Q$-cycles, and, for each rational element, the $T$-cycle appearing in its orbit of $T$ iterates. So far, no $Q$-cycle was found having a prime period strictly greater than 2. This leads one to think that there is none.

\begin{conjecture} {\em (Odd Cycles Conjecture)}
\label{conj:cycles}
The function $Q$ has exactly two odd fixed points, $-1$ and $\frac{1}{3}$, and two odd cycles of prime period 2,  $\left( -\frac{1}{3}, 1\right) $ and $\left( -\frac{1}{5}, \frac{5}{7}\right) $. There exists no other odd cycle, rational or not.
\end{conjecture}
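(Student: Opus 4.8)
The period of any $Q$-cycle is a power of $2$ (Theorem \ref{th:Qn_cycles}), so a cycle of prime period is necessarily a $2$-cycle, and the statement splits into three assertions about odd points: (A) the only odd fixed points of $Q$ are $-1$ and $\tfrac13$; (B) the only odd $2$-cycles are $(-\tfrac13,1)$ and $(-\tfrac15,\tfrac57)$; (C) $Q$ has no odd cycle of period $2^{j}$ with $j\ge 2$. I would recast all three through the finite permutations $Q_n$. An odd point $x$ on a $Q$-cycle of period $2^{j}$ gives, for every $n$, a cycle of $Q_n$ through $x \bmod 2^{n}$ of length dividing $2^{j}$, so its $Q_n$-cycle length stays bounded; conversely, if those lengths are bounded by $2^{j}$ for all $n$ then $Q^{2^{j}}(x)\equiv x \pmod{2^{n}}$ for all $n$, hence $Q^{2^{j}}(x)=x$. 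On the other hand, by Theorems \ref{th:Q_ergodic} and \ref{th:cycle_edp}, a cycle of $Q_m$ whose length eventually doubles (ever-doubling) spans an ergodic set, every point of which has unbounded $Q_n$-cycle length and is therefore not $Q$-periodic. Thus the conjecture is equivalent to: among the odd residues, the cycles of $Q_n$ whose length stays bounded as $n\to\infty$ are exactly the six threads converging to $-1,\ \tfrac13,\ -\tfrac13,\ 1,\ -\tfrac15,\ \tfrac57$; informally, the short cycles of $Q_n$ must not proliferate.

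As a first concrete target I would settle the rational case. Corollary \ref{cor:Q_iter} already confines any odd fixed point to $B(-1,2^{-4})\cup B(\tfrac13,2^{-4})$ and each odd $2$-cycle point to a comparably small union of balls, and pushing the congruences of Theorem \ref{th:Qn_cycles} modulo higher powers of $2$ narrows these neighbourhoods further. Inside them I would run a descent on the ``height'' of a hypothetical rational $Q$-cycle point $p/q$ (in lowest terms, $q$ odd): splitting on the residue of $p/q$ modulo small powers of $2$ and rewriting $Q(p/q)$ via the appropriate functional equation from Theorem \ref{th:Q_feq} --- the doubling rule \eqref{eq:Q_feq_2x}, the branch \eqref{eq:Q_feq_2xp1}/\eqref{eq:Q_feq_4xp1}, or a coalescence identity such as \eqref{eq:Q_feq_3xp1} --- one expresses the next point of the cycle as a rational whose height does not increase, and then argues that the height can stabilise along a full cycle only at the six known points. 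This is the computational counterpart of the branching analysis in \cite[\S6]{Ber96}. The delicate issue here is completeness and termination: each equation in Theorem \ref{th:Q_feq} applies only on a set of density at most $\tfrac12$, so the case tree must be organised and shown to cover every odd residue class and to contain no infinite branch --- exactly the bookkeeping that makes such finite arguments hard to close.

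The genuine obstacle, however, is the exclusion of \emph{irrational} odd cycles. Because $Q$ is nowhere differentiable (M\"uller \cite{Mul91}), there is no contraction, analyticity, or transfer-operator mechanism for pinning a cycle down from finitely many of its $2$-adic digits; the $Q_n$-reformulation only shows that a hypothetical extra cycle would appear as an accumulation of short cycles of the permutations $Q_n$, which is precisely the behaviour the heuristic of \cite[\S6]{Ber96} predicts is impossible but that no one has ruled out rigorously. I expect a full proof to require either a proof of the Periodicity Conjecture (Conjecture \ref{conj:periodicity}), which would reduce the whole statement to the rational case, or a new rigidity result bounding, uniformly in $n$, the number of short cycles of $Q_n$. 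Absent such an input the statement should be regarded as beyond current reach, and even the rational sub-case, while plausibly attainable by a long finite computation, still needs a non-trivial argument that the descent actually terminates.
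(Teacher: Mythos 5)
This statement is labelled a conjecture in the paper, and the paper offers no proof of it: its support consists of (i) the direct verification that the listed orbits are indeed cycles, e.g.\ $Q\left(-\tfrac15\right)=\overline{001}1_2=\tfrac57$ and $Q\left(\tfrac57\right)=\overline{0011}_2=-\tfrac15$ via the parity sequences (or formula \eqref{eq:inverse_2adic1}), and (ii) a finite numerical search, up to period $16$, over rationals $p/q$ with $|p|,|q|<1000$ carried out modulo $2^{40}$. Your assessment is therefore the correct one: you do not claim a proof, and none exists in the paper. Your structural reductions are sound and consistent with the text --- the period of any $Q$-cycle is a power of $2$ by Theorem \ref{th:Qn_cycles}, periodicity of $x$ is equivalent to boundedness of the $Q_n$-cycle lengths of $x \bmod 2^n$, and the ergodic sets of Theorem \ref{th:Q_ergodic} (ever-doubling cycles) contain no periodic points since there the $Q_n$-cycle lengths grow like $2^{n-m+k}$. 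Your identification of the two genuine obstacles (completeness/termination of any finite descent in the rational case, and the total absence of a mechanism for excluding irrational cycles short of the Periodicity Conjecture or a uniform bound on short cycles of $Q_n$) matches why the author leaves this as a conjecture.

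Two small corrections. First, Corollary \ref{cor:Q_iter} does not by itself confine odd fixed points to $B(-1,2^{-4})\cup B\left(\tfrac13,2^{-4}\right)$: the congruence \eqref{eq:Q2} only says $Q^2(x)\equiv x \pmod{2^4}$, and a fixed point must satisfy $Q(x)\equiv x\pmod{2^4}$, i.e.\ lie in one of the \emph{four} odd residue classes $3,7,11,15$ fixed by $Q_4$ (the odd classes $1,5,9,13$ lie on the $2$-cycles of $Q_4$); the sharper localization near $-1$ and $\tfrac13$ is the separate numerical observation the paper alludes to, obtained at higher moduli. Second, the one piece of the statement that is provable with present tools --- that the four named orbits really are cycles of $Q$ --- is worth writing out explicitly (as the paper does for $(-\tfrac15,\tfrac57)$, and as Example \ref{ex:inverse_2adic} and Table \ref{tab:cycles} do for $-1$, $\tfrac13$, $(-\tfrac13,1)$); your proposal leaves even this existence half implicit.
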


\section{The $3x+1$ set}

\subsection{Euclidean embedding}
\label{sub:embed}
Overall, the automorphism $Q$ and its inverse remain somewhat mysterious. One may wish to somehow visualize their action on $\mathbb Z_2$. Recall that the space $\mathbb Z_2$ is not Euclidean and totally disconnected, which makes it difficult to represent graphically \cite{Chi96, Cuo91}. It is known to be homeomorphic to the Cantor ternary set, which has Lebesgue measure 0. We propose to apply a continuous function $M$ that sends $\mathbb Z_2$ to the real interval $[0, 2]$. The map $M$, as defined below, is very similar to the Monna\footnote{The original Monna map sends $\mathbb Z_2$ to $[0, 1]$.}  map \cite{Mon52}. 

\begin{definition} \label{def:M}
Let $M$ denote the continuous 2-Lipschitz map from $\mathbb Z_2$ to $[0, 2]$
$$ M: \sum_{k=0}^{\infty} r_{k} 2^{k} \longmapsto \sum_{k=0}^{\infty} r_k 2^{-k}, \quad \text{where $r_k=0$ or 1}.$$
\end{definition}

The action of the map $M$ may seem counter-intuitive, as it does not preserve the usual order between the rational numbers. For instance, the images of positive and negative rational numbers are deeply intertwined. But, conveniently, $M$ sends 2-adic balls with radius $2^{-k}$ onto real intervals of length $2^{1-k}$, so that the set of odd 2-adic integers is entirely mapped on the interval $[1, 2]$. We call it the ``odd'' side, whereas $[0, 1]$ may be regarded as the ``even'' side.

Let us point out that $M$ is not one-to-one, since
$$ M(1) = 1 = \sum_{k=1}^{\infty} 2^{-k} = M(-2),$$
and more generally,
$$ M\left( n+2^k\right)  = M\left( n-2^{k+1}\right)  = M(n) + 2^{-k} \quad \text{for } 0 \leq n \leq 2^{k}-1.$$
As a result, the mapping $M$ is not truly an embedding, although when restricted to $\mathbb Z_2 \setminus \mathbb Z$ it is one-to-one. Further, it is easily seen that the set $M(\mathbb Z)$ coincides with the set of dyadic numbers, namely, rationals whose denominator is a power of 2, from the interval $[0,2]$.

\begin{definition} \label{def:Qset}
Letting $X=M$ and $Y=M \circ Q$, we call ``$3x+1$" set the parametric set of the plane $\mathbb R^2$ denoted by $\Qset$ and defined by 
$$ \Qset = (X,Y)(\mathbb Z_2) = \lbrace \left( M(r), M(Q(r)) \right) : r \in \mathbb Z_2 \rbrace.$$
\end{definition}

As shown below, each point of $\Qset$ corresponds to a unique parity sequence. Somehow, the $3x+1$ set ``fully'' encodes the dynamics of the $3x+1$ map.
 
\begin{lemma} \label{lem:XY_oneone}
The function $(X,Y): \mathbb Z_2  \rightarrow [0, 2]^2$   is one-to-one and continuous with respect to the 2-adic measure on its domain.
\end{lemma}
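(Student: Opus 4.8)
The plan is to establish the two claimed properties — injectivity and continuity — separately, since continuity is essentially immediate from the construction while injectivity requires a short argument exploiting the interaction between the digit-collisions of $M$ and the rigidity of $Q$. For continuity: both $X=M$ and $Y=M\circ Q$ are compositions of continuous maps. Indeed $M$ is 2-Lipschitz from $(\mathbb Z_2,|\cdot|_2)$ to $[0,2]$ by Definition \ref{def:M}, and $Q$ is 1-Lipschitz with respect to the 2-adic norm by \eqref{eq:Q_sol1}; hence $M\circ Q$ is 2-Lipschitz as well, and $(X,Y)$ is continuous (in fact Lipschitz) from $\mathbb Z_2$ to $[0,2]^2$. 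This also makes precise the phrase ``continuous with respect to the 2-adic measure on its domain'' — the topology on the source is the 2-adic one.

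For injectivity, suppose $(X,Y)(r)=(X,Y)(r')$ with $r\neq r'$. Then $M(r)=M(r')$, and by the explicit description of the fibres of $M$ given just before Definition \ref{def:Qset}, a collision $M(r)=M(r')$ with $r\neq r'$ forces both $r$ and $r'$ to be rational integers: more precisely, one of them is of the form $n+2^k$ and the other $n-2^{k+1}$ for some $0\le n\le 2^k-1$ and some $k\ge 0$ (the only coincidences of $M$ come from the two 2-adic expansions $\ldots000\,1\,00\ldots0$ and $\ldots111\,0\,00\ldots0$ of a dyadic value). In particular $r\equiv r'\pmod{2^k}$ but $r\not\equiv r'\pmod{2^{k+1}}$, so $|r-r'|_2=2^{-k}$. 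Now apply the same reasoning on the $Y$-coordinate: $M(Q(r))=M(Q(r'))$ with, a priori, $Q(r)$ possibly equal to $Q(r')$ or possibly a genuine $M$-collision. Since $Q$ is injective (Definition \ref{def:Q}), $Q(r)\neq Q(r')$, so we are in the genuine-collision case again, whence $|Q(r)-Q(r')|_2=2^{-\ell}$ for some $\ell\ge 0$. But $Q$ is a 2-adic isometry by \eqref{eq:Q_sol2}, so $|Q(r)-Q(r')|_2=|r-r'|_2$, forcing $\ell=k$. Thus $Q(r)$ and $Q(r')$ are two rational integers differing by a collision of $M$ at the same scale $k$: one equals $m+2^k$ and the other $m-2^{k+1}$ for some $0\le m\le 2^k-1$.

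It remains to derive a contradiction from the simultaneous relations $\{r,r'\}=\{n+2^k,\,n-2^{k+1}\}$ and $\{Q(r),Q(r')\}=\{m+2^k,\,m-2^{k+1}\}$. Here I would use the functional equation \eqref{eq:Q_feq_2x}, i.e.\ $Q(2x)=2Q(x)$, together with the fact that $Q$ fixes the residue mod $2$ (so $Q$ maps the odd integers to odd integers): writing $2^k=2^{v}\cdot u$ with $u$ odd, $v\ge 0$, one checks that $Q(n+2^k)$ and $Q(n-2^{k+1})$ cannot straddle the dyadic value $m$ in the required way, because $M(Q(n+2^k))-M(Q(n-2^{k+1}))=\pm 2^{-k}$ would have to be matched against the actual increment, which the autoconjugacy constrains — concretely, one peels off the common factors of $2$ via \eqref{eq:Q_feq_2x} to reduce to the odd case, where $Q$ maps odd integers to odd integers and the two candidate images $m+2^k$, $m-2^{k+1}$ have opposite parities for $k=0$ and are otherwise both congruent to $m\pmod 2$, so exactly one of the two possible sign conventions survives, and a direct check on the first few digits of the 2-adic expansions of $n\pm 2^{k}$ versus $m\pm 2^{k}$ yields the contradiction. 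The main obstacle is precisely this last bookkeeping step: one must show that no rational integer $r$ has the property that $Q(r)$ is *also* positioned as the ``other side'' of an $M$-collision at the same scale, and the cleanest route is probably to observe that $M$-collisions only occur at rational integers, that $Q$ permutes $\mathbb Z$ setwise in a way compatible with \eqref{eq:Q_feq_2x}, and to track the sign of $r-n$ (i.e.\ whether $r$ is the ``$+2^k$'' or the ``$-2^{k+1}$'' member) through $Q$ using the parity-vector description of $Q$ on integers. If this sign-tracking turns out to be delicate, a safe fallback is to restrict attention first to $\mathbb Z_2\setminus\mathbb Z$, where $M$ is genuinely injective and the lemma is trivial, and then handle the countable set $\mathbb Z$ by the explicit collision formula above, checking the finitely-many relevant congruence constraints by hand.
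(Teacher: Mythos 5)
Your continuity argument and the first half of your injectivity argument match the paper: a coincidence $M(r)=M(r')$ with $r\neq r'$ forces $r,r'$ to be a pair of rational integers of the form $\{n+2^{k},\,n-2^{k+1}\}$, and since $Q$ is injective the equality $M(Q(r))=M(Q(r'))$ forces $Q(r),Q(r')$ to be a second such pair (your isometry remark pinning the scale to the same $k$ is correct but not needed). The gap is in your final step. The configuration you must exclude is not obstructed by any finite amount of congruence or parity information: since $Q$ is a $2$-adic isometry, the residues of $Q(n+2^{k})$ and $Q(n-2^{k+1})$ modulo any fixed power of $2$ are perfectly compatible with their being $m+2^{k}$ and $m-2^{k+1}$, so ``peeling off factors of $2$ via \eqref{eq:Q_feq_2x}'' and ``a direct check on the first few digits'' cannot yield a contradiction. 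The real obstruction is a statement about the \emph{tails}: in the second collision pair one member, say $Q(r)=m+2^{k}$, is a \emph{positive} integer, i.e.\ has a finite binary expansion, which means the parity sequence of $r$ is eventually zero; and no rational integer has this property. Your sketch never uses positivity of one of the $Q$-values, and your fallback (``handle $\mathbb Z$ by checking finitely many congruence constraints by hand'') does not exist — the family of candidate pairs is infinite and the exclusion is not a congruence condition.

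This is exactly the point the paper settles with the inverse formula \eqref{eq:inverse_2adic1}: if $Q(r)$ is a positive integer, the series for $r=V_\infty^{-1}$ is a finite sum whose value is a rational with denominator a power of $3$ strictly greater than $1$ (numerator coprime to $3$), contradicting $r\in\mathbb Z$. An equivalent dynamical patch you could use instead: if the parity sequence of a rational integer $r$ is eventually zero, then from the last odd iterate $T^{k}(r)$ onward all iterates are even, so $T^{k+1}(r)$ is divisible by every power of $2$, hence $T^{k+1}(r)=0$ and $T^{k}(r)=-\tfrac13$, which is not an integer. Either way, you need this global argument about eventually-even parity sequences; without it your proof does not close.
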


\begin{proof}
Suppose there exist two distinct 2-adic integers $a$ and $b$ such that $X(a)=X(b)$ and $Y(a) = Y(b)$. We infer that $a$, $b$, $Q(a)$, and $Q(b)$ are all in $\mathbb Z$, and $Q(a)$ or $Q(b)$ is positive. Say $Q(a)$ is a positive integer, so it has a finite binary expansion. From the inverse formula \eqref{eq:inverse_2adic1}, it follows that $a$ is rational with denominator a power of 3 strictly greater than 1, and numerator coprime to 3. Since $a$ is a rational integer, there yields a contradiction. Hence, $(X,Y)$ is one-to-one.
 
Moreover, it is also continuous as a composition of continuous functions.
\end{proof}

Viewing $(X,Y)$ as a continuous bijection between the compact sets $\mathbb Z_2$ and $\Qset$, then its inverse is known to be continuous. Therefore $(X,Y)$ is an embedding from the parameter space $\mathbb Z_2$ into the Euclidean space $\mathbb R^2$. 

\begin{figure}
\centering
\includegraphics[scale=0.92]{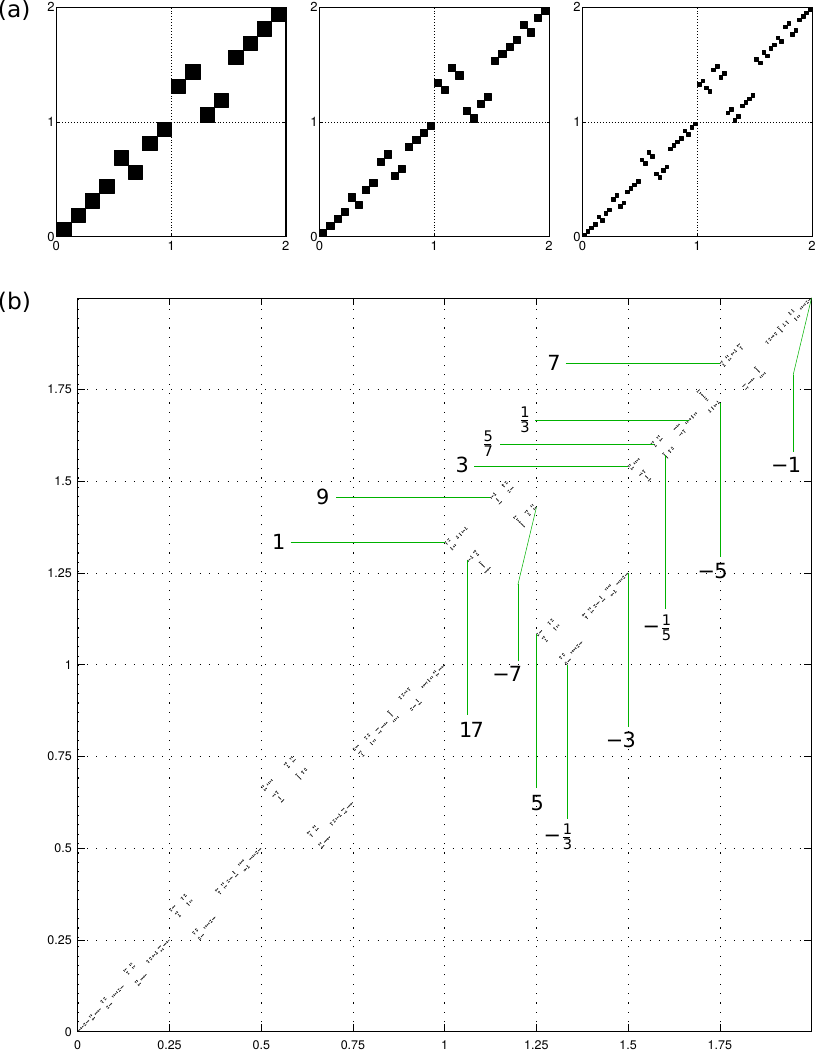}
\caption{\textbf{(a)} Coverings of $\Qset$ made of $2^k$ squares of side length $2^{1-k}$ for $k=4$, 5, and 6, from left to right. \textbf{(b)} The set $\Qset$ with (green) line segments indicating the rational points from Table \ref{tab:points}, along with their respective parameter value.}
\label{fig:QR}
\end{figure}

The fact that rationality is always preserved by the map $M$ leads to an immediate reformulation of the Periodicity Conjecture. 

\begin{conjecture} {\em (Rational Points Conjecture)}
All points in $\Qset$ have coordinates that are either both rational or both irrational.
\end{conjecture}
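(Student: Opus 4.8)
Since this statement is Lagarias' Periodicity Conjecture (Conjecture~\ref{conj:periodicity}) in disguise, what can actually be carried out is the equivalence rather than a self-contained proof, and the plan is to make that equivalence precise and to isolate exactly what remains open. The one genuinely new ingredient is a rationality lemma for the Monna-type map $M$: for every $z \in \mathbb{Z}_2$ one has $M(z) \in \mathbb{Q}$ if and only if $z \in \mathbb{Q}$. I would prove this from two classical facts: (i) a $2$-adic integer $\sum_{k \geq 0} r_k 2^{k}$ is rational precisely when the bit sequence $(r_k)$ is eventually periodic (as recalled in Section~\ref{sec:2-adic}), and (ii) a real number $\sum_{k \geq 0} r_k 2^{-k} \in [0,2]$ is rational precisely when $(r_k)$ is eventually periodic. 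The forward and backward implications then match up literally, since $M$ acts by reading the same bit sequence in the two different bases. The only point needing care is the non-injectivity of $M$ at dyadic values, but every binary expansion of a rational real is eventually periodic (both expansions of a dyadic rational are), so the dichotomy ``rational/irrational'' is unaffected by this ambiguity.

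Granting the lemma, the reduction is immediate. A point of $\Qset$ is $(X(r),Y(r)) = \bigl(M(r),\, M(Q(r))\bigr)$ for a unique $r \in \mathbb{Z}_2$, so by the lemma $X(r)$ is rational if and only if $r$ is rational, and $Y(r)$ is rational if and only if $Q(r)$ is rational. Hence the assertion ``every point of $\Qset$ has both coordinates rational or both irrational'' says exactly that, for all $r \in \mathbb{Z}_2$, $r$ is rational if and only if $Q(r)$ is rational, which is Conjecture~\ref{conj:periodicity}. Moreover the implication ``$Q(r)$ rational $\Rightarrow$ $r$ rational'' is already a theorem of Bernstein~\cite{Ber94}, so the half ``$Y(r)$ rational $\Rightarrow$ $X(r)$ rational'' holds unconditionally; what remains to be established is the single implication $r$ rational $\Rightarrow$ $Q(r)$ rational, i.e. that every rational $2$-adic integer has an eventually periodic parity sequence (equivalently, is $T$-preperiodic).

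That last implication is the main obstacle, and it is precisely the deep open half of the Periodicity Conjecture: it subsumes not only the nonexistence of nontrivial $T$-cycles on $\mathbb{Z}^{+}$ but also the orbit structure of negative integers and of non-integer rationals, and no current technique reaches it. The partial attacks I would attempt, and expect to yield only conditional statements, are: (a) a finite computation modulo a large power of $2$ verifying the dichotomy for rationals $p/q$ with $|p|,|q|$ bounded, in the spirit of the cycle search of Section~\ref{sub:cycles}; and (b) propagating rationality up the binary tree generated by $x \mapsto 2x$ and $x \mapsto 2x+1$ via the functional equations of Theorem~\ref{th:Q_feq}, which fails to be a complete algorithm precisely because \eqref{eq:Q_feq_2xp1} covers only a subset of $\mathbb{N}$ of density $1/2$. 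Thus the realistic endpoint of the argument is the clean reduction: $\Qset$ satisfies the rational-points dichotomy if and only if $Q$ satisfies the Periodicity Conjecture, the only new content being the (routine) rationality-preservation of $M$.
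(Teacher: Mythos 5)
Your proposal is correct and takes essentially the same route as the paper: the statement is left as a conjecture there, presented precisely as an ``immediate reformulation'' of the Periodicity Conjecture (Conjecture~\ref{conj:periodicity}) via the fact that $M$ preserves rationality in both directions, which is exactly the reduction you spell out. Your added details --- the eventual-periodicity argument for $M$, the handling of dyadic ambiguity, and the observation that Bernstein's theorem gives the implication ``$Y(r)$ rational $\Rightarrow X(r)$ rational'' unconditionally --- are consistent with, and slightly more explicit than, the paper's remarks.
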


In Table \ref{tab:points}, we provide the coordinates of various rational points from the set $\Qset$. Among them, six points  are associated with one of the odd $Q$-cycles in Table \ref{tab:cycles} (\S\ref{sub:cycles}), whose respective parameter values are $-1$, $-1/3$, $-1/5$, $1/3$, $5/7$, and 1.

{\renewcommand{\arraystretch}{1.5}
\begin{table}
\caption{A few rational points in $\Qset$ associated with an odd rational value of the 2-adic parameter $r$ and sorted by increasing abscissa.}
\label{tab:points}
\begin{center}
\begin{tabular}{cc}

\begin{tabular}{|r|r|r|r|}
\hline
$r$ & $Q(r)$ & $X(r)$ & $Y(r)$ \\
\hline
\hline
1 & $-\frac{1}{3}$ & 1 & $\frac{4}{3}$ \\
\hline
17 & $-\frac{401}{3}$ & $\frac{17}{16}$ & $\frac{493}{384}$ \\
\hline
9 & $-\frac{6377}{3}$ & $ \frac{9}{8} $ & $ \frac{8941}{6144} $ \\
\hline
$-7$ & $ -\frac{5}{7} $ & $ \frac{5}{4} $ & $ \frac{10}{7} $ \\
\hline
5 & $ -\frac{13}{3} $ & $ \frac{5}{4} $ & $ \frac{13}{12} $ \\
\hline
$ -\frac{1}{3} $ & 1 & $ \frac{4}{3} $ & 1 \\
\hline
$-3$ & $-7$ & $ \frac{3}{2} $ & $ \frac{5}{4} $ \\
\hline
\end{tabular}

\begin{tabular}{|r|r|r|r|}
\hline
$r$ & $Q(r)$ & $X(r)$ & $Y(r)$ \\
\hline
\hline
3 & $ -\frac{23}{3} $ & $ \frac{3}{2} $ & $ \frac{37}{24} $ \\
\hline
$ \frac{5}{7} $ & $ -\frac{1}{5} $ & $ \frac{11}{7} $ & $ \frac{8}{5} $ \\
\hline
$ -\frac{1}{5} $ & $ \frac{5}{7} $ & $ \frac{8}{5} $ & $ \frac{11}{7} $ \\
\hline
$ \frac{1}{3} $ & $ \frac{1}{3} $ & $ \frac{5}{3} $ & $ \frac{5}{3} $ \\
\hline
$-5$ & $ -\frac{3}{7} $ & $ \frac{7}{4} $ & $ \frac{12}{7} $ \\
\hline
7 & $ -\frac{1595}{3} $ & $ \frac{7}{4} $ & $ \frac{2797}{1536} $ \\
\hline
$-1$ & $-1$ & 2 & 2 \\
\hline
\end{tabular}
\end{tabular}
\end{center}
\end{table}

Regardless of the 2-adic parametrization of $\Qset$, one can visualize it by taking only natural integers. This is due to the density of $\mathbb N$ in $\mathbb Z_2$, and to the fact that both functions $M$ and $Q$ are Lipschitz.  Practically, it suffices to calculate the parity vectors of length $k$ for every nonnegative integer up to $2^k$ for some $k$ reasonably large, and apply $M$ on the resulting binary expansions. We took $k=12$ in Figure \ref{fig:QR}b.

The same method  has been already used, e.g., by Hashimoto in \cite{Has07}, to represent the ``graph'' of the map\footnote{Most often, a variant of the map $T$ is considered, leading to a slower or faster dynamics.}  $T$ acting on $\mathbb Z_2$. 

A slightly different construction of the $3x+1$ set can be achieved by considering a sequence of nested sets made of finitely many squares  (Figure \ref{fig:QR}a). Although their respective areas tend to zero as the number of squares increase, we prove in Lemma \ref{lem:Qset_boxes} that they all cover $\Qset$.
\begin{lemma} \label{lem:Qset_boxes}
For all positive integers $k$, we have
$$ \Qset \subset \bigcup_{n=0}^{2^{k}-1} \left[ X(n), X(n) + 2^{1-k} \right] \times  \left[ Y_k(n), Y_k(n) + 2^{1-k} \right]$$
where $Y_k(n) = M\left( Q(n) \mod 2^k \right)$.
\end{lemma}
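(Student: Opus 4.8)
The plan is to show that every point $(X(r),Y(r))$ of $\Qset$ lies in one of the listed squares by selecting the square indexed by $n = r \bmod 2^k$. First I would fix a positive integer $k$ and an arbitrary $r \in \mathbb Z_2$, and set $n$ to be the unique integer in $\{0,1,\ldots,2^k-1\}$ with $r \equiv n \pmod{2^k}$, i.e.\ $r \in B(n,2^{-k})$. The two coordinates are then handled by the same mechanism but using the two different ``reductions modulo $2^k$'' that appear in the statement: the abscissa uses $n$ itself, and the ordinate uses $Q(n) \bmod 2^k$.

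For the abscissa, I would use that $M$ sends the ball $B(n,2^{-k})$ onto the interval $[M(n \bmod 2^k), M(n \bmod 2^k) + 2^{1-k}]$ — this is exactly the property recorded right after Definition~\ref{def:M} that $M$ maps $2$-adic balls of radius $2^{-k}$ onto real intervals of length $2^{1-k}$, together with the observation that the left endpoint is obtained by truncating the $2$-adic expansion to its first $k$ digits, which is $M(n)=X(n)$ since $0 \le n \le 2^k-1$. Hence $X(r) = M(r) \in [X(n), X(n)+2^{1-k}]$. For the ordinate, the key point is the $1$-Lipschitz / isometry property \eqref{eq:Q_sol1}: from $r \equiv n \pmod{2^k}$ we get $Q(r) \equiv Q(n) \pmod{2^k}$, so $Q(r) \in B(Q(n),2^{-k})$; applying the same interval property of $M$ to this ball gives $Y(r) = M(Q(r)) \in [M(Q(n)\bmod 2^k), M(Q(n)\bmod 2^k)+2^{1-k}] = [Y_k(n), Y_k(n)+2^{1-k}]$, using the definition $Y_k(n)=M(Q(n)\bmod 2^k)$. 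Combining the two inclusions shows $(X(r),Y(r))$ lies in the square with index $n$, and letting $r$ range over $\mathbb Z_2$ gives the claimed covering.

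The only genuinely delicate point — and the one I would state carefully rather than wave at — is the precise claim that $M\bigl(B(y,2^{-k})\bigr) = [M(y \bmod 2^k),\, M(y\bmod 2^k)+2^{1-k}]$ for every $y \in \mathbb Z_2$, with the subtlety that $M$ is \emph{not} injective (as the paper emphasizes): the interval is hit surjectively and is genuinely contained in $[0,2]$, but its left endpoint must be identified correctly as the image of the $k$-digit truncation of $y$. This follows by writing $y = \sum_{i\ge 0} r_i 2^i$, so that any $x \in B(y,2^{-k})$ agrees with $y$ in digits $r_0,\ldots,r_{k-1}$ and $M(x) = \sum_{i=0}^{k-1} r_i 2^{-i} + \sum_{i\ge k} (\text{digit}_i) 2^{-i}$, where the tail ranges exactly over $[0, \sum_{i \ge k} 2^{-i}] = [0, 2^{1-k}]$ as the remaining digits vary; and $\sum_{i=0}^{k-1} r_i 2^{-i} = M(y \bmod 2^k)$. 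I do not expect any real obstacle beyond being scrupulous here, since both ingredients (the interval image of a ball under $M$, and the $2$-adic non-expansiveness of $Q$) are already available in the excerpt; the proof is essentially a two-coordinate application of the same observation.
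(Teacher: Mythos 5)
Your proof is correct and follows essentially the same route as the paper: take $n = r \bmod 2^k$, use that $M$ maps the ball $B(n,2^{-k})$ onto the interval $[X(n),X(n)+2^{1-k}]$ for the abscissa, and use the non-expansiveness/isometry of $Q$ (so $Q(r)\in B(Q(n),2^{-k})$) together with the same interval property for the ordinate. Your extra care in justifying $M\bigl(B(y,2^{-k})\bigr)=[M(y\bmod 2^k),\,M(y\bmod 2^k)+2^{1-k}]$ is a fine addition but not a departure from the paper's argument.
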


\begin{proof}
Let $r$ be a 2-adic integer, and let $k \geq 0$. Setting $n=r \mod 2^k$, we have
$$ X(r) \in X\left( B(n,2^{-k})\right)  = \left[ X(n), X(n) + 2^{1-k} \right]$$
and
$$ Y(r) \in Y\left( B(n,2^{-k})\right)  = M\left( B(Q(n),2^{-k})\right) 
= \left[ Y_k(n), Y_k(n) + 2^{1-k} \right].$$
The inclusion claimed in Lemma \ref{lem:Qset_boxes} follows. 
\end{proof}

From this result, we infer that
\begin{equation}
\Qset \subset \bigcap_{k=0}^{\infty} \bigcup_{n=0}^{2^{k}-1} \left[ X(n), X(n) + 2^{1-k} \right] \times  \left[ Y_k(n), Y_k(n) + 2^{1-k} \right].
\end{equation}
In fact, the equality holds, as both sets have exactly the same number of intersections with every line parallel to the $y$-axis.

Another corollary of Lemmas \ref{lem:XY_oneone} and \ref{lem:Qset_boxes} is the presence of infinitely many discontinuities in $\Qset$ for the Euclidean metric, at each point whose abscissa is dyadic, except the extremal points (0,0) and (2,2).
 
 Let us observe in Figure \ref{fig:QR}b that it has a rather symmetric aspect with respect to the diagonal $\Delta = \lbrace(x,x) : 0 \leq x \leq 2\rbrace$. This is mainly due to the congruence \eqref{eq:Q2} in Corollary \ref{cor:Q_iter}. An underlying question would be to determine how much does the function $Q$ differ from its inverse, about which little is known. From Figure \ref{fig:QR}a, it is clear that the symmetry is broken only at a rather small scale. It turns out that few points are effectively symmetric. Despite the non-injectivity of the map $M$, it is most likely that only those points whose parameter values are part of a $Q$-cycle of period at most 2, are symmetric. We obtain thereby  a symmetric subset that is expected to contain exactly six points in the  upper right quarter of the set $\Qset$, two of them being on $\Delta$ (see Conjecture \ref{conj:cycles} in \S\ref{sub:cycles}, and Table \ref{tab:points}).

One may further notice a number of affine self-similarities, some of which are made explicit in the next section (\S\ref{sub:autosim}).

\subsection{Self-similarity}
\label{sub:autosim}

As a result of the functional equations \eqref{eq:Q_feq_2x} and \eqref{eq:Q_feq_2xp1} satisfied by $Q$, it is possible to delimit regions of $\Qset$ that are identical through an affine transformation. To this aim, we first introduce two infinite families of real intervals, which realize a covering of the half-open interval $\left[  0, 2 \right) $.
\begin{definition} \label{def:IJ}
For every integer $k \geq 2$, let
$$ \alpha_k = -1 - (-2)^{k-2} \mod 2^{k}, \quad m_k = 2^{k-2} - 1, \quad n_k = 3 \cdot 2^{k-2} -1,$$
so that $\alpha_k = m_k$ if $k$ is odd, and $\alpha_k = n_k$ otherwise. Then define the real intervals
$$ I_k = \left[  M(m_k), M(n_k) \right]  = \left[  2 - 2^{3-k}, 2 - 3\cdot 2^{1-k}\right] $$
and
$$ J_k = \left[  M(n_k), M(m_{k+1}) \right]  = \left[  2 - 3\cdot 2^{1-k}, 2 - 2^{2-k}\right]$$
of length $2^{1-k}$. The mapping $M$ sends the 2-adic ball $B(\alpha_k,2^{-k})$ onto $I_k$ or $J_k$ alternatively, according to the parity of $k$.
\end{definition}

The next lemma, along with Corollary \ref{cor:ball_to_interval}, will prove useful to delimit in the $3x+1$ set all parts corresponding to parametric values in the same congruence class as $m_k$ or $n_k$ modulo $2^k$.
\begin{lemma}
\label{lem:mk_nk}

The integers $(m_k)_{k\geq2}$ and $(n_k)_{k\geq2}$ have the properties
\begin{align}
 &Q(m_k) \equiv m_k \pmod{2^k}  &\text{and} \quad &Q(n_k) \equiv n_k \pmod{2^k} & \text{for $k$ even,} \label{eq:mnk_even} \\
 &Q(m_k) \equiv n_k \pmod{2^k}  &\text{and} \quad &Q(n_k) \equiv m_k \pmod{2^k} & \text{for $k$ odd.} \label{eq:mnk_odd}
\end{align}
\end{lemma}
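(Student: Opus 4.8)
The plan is to compute $Q(m_k)$ and $Q(n_k)$ modulo $2^k$ directly from the parity vectors of $m_k$ and $n_k$, exploiting the fact that these integers have very structured $T$-orbits. Recall that $Q(x) \bmod 2^k$ depends only on $V_k(x)$, the parity vector of length $k$, and in fact $Q(x) \equiv \sum_{i=0}^{k-1} s_i 2^i \pmod{2^k}$ where $(s_0,\dots,s_{k-1}) = V_k(x)$. So it suffices to determine the first $k$ parities in the $T$-orbits of $m_k$ and $n_k$.

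First I would note that $n_k = m_k + 2^{k-1}$ and that $\alpha_k$ is whichever of $m_k,n_k$ is congruent to $-1-(-2)^{k-2} \pmod{2^k}$. The proof of the functional equation \eqref{eq:Q_feq_2xp1} already analyzed the $T$-orbit of $x \equiv -1-(-2)^{k-2} \pmod{2^k}$: it showed the first $k-3$ iterates are odd and $T^{k-2}$ is even, i.e. $V_k(\alpha_k) = (1,1,\dots,1,0,1)$ of length $k$ (with the single $0$ in position $k-2$). Hence $Q(\alpha_k) \equiv (2^{k-1} - 1) - 2^{k-2} + 2^{k-1} = 2^k - 1 - 2^{k-2} \equiv -1 - 2^{k-2} \pmod{2^k}$. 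Now $-1-2^{k-2} \equiv m_k$ or $n_k$ depending on parity: if $k$ is even then $2^{k-2} \equiv -(-2)^{k-2}$, wait — I must be careful here. Since $(-2)^{k-2} = (-1)^{k-2} 2^{k-2} = (-1)^k 2^{k-2}$, we have $\alpha_k \equiv -1-(-1)^k 2^{k-2}$, which equals $m_k = 2^{k-2}-1 \equiv -1 - 2^{k-2} + 2^{k-1}$ — so $-1 - 2^{k-2} \equiv n_k \pmod{2^k}$ actually when... Let me just say: I would carefully match residues, using $m_k \equiv -1 - 2^{k-2} + 2^{k-1} \pmod{2^k}$ and $n_k \equiv -1 + 2^{k-2} \pmod{2^k}$, to identify $Q(\alpha_k) \bmod 2^k$ as the appropriate one of $m_k,n_k$. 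This gives one of the two congruences in each line of the lemma (namely the one with $\alpha_k$ on the left).

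For the companion integer $\beta_k \in \{m_k,n_k\} \setminus \{\alpha_k\}$, i.e. $\beta_k = \alpha_k + 2^{k-1}$ or $\alpha_k - 2^{k-1}$, I would compute $V_k(\beta_k)$. Since $\beta_k \equiv \alpha_k \pmod{2^{k-1}}$, the first $k-1$ parities of $\beta_k$ agree with those of $\alpha_k$: that is $(1,1,\dots,1,0)$ (the $0$ in position $k-2$). Only the $k$-th parity $s_{k-1}$ may differ. Because $V_k$ is a bijection from residues mod $2^k$ to binary vectors of length $k$, and $\alpha_k,\beta_k$ are the two residues mod $2^k$ above the common residue mod $2^{k-1}$, the value $s_{k-1}$ for $\beta_k$ is the complement of that for $\alpha_k$, namely $s_{k-1}(\beta_k) = 1 - 1 = 0$. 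Hence $Q(\beta_k) \equiv (2^{k-1}-1) - 2^{k-2} + 0 = 2^{k-1} - 1 - 2^{k-2} \pmod{2^k}$. Then again matching this residue against $m_k \equiv -1-2^{k-2}+2^{k-1}$ and $n_k \equiv -1 + 2^{k-2}$: note $2^{k-1} - 1 - 2^{k-2} = 2^{k-2} - 1 = m_k$, so $Q(\beta_k) \equiv m_k \pmod{2^k}$. Combining with the fact that $Q(\alpha_k)$ was the complementary value, I obtain exactly the paired congruences \eqref{eq:mnk_even} and \eqref{eq:mnk_odd}, with the even/odd split coming from which of $m_k,n_k$ plays the role of $\alpha_k$.

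The main obstacle, and the step demanding real care rather than cleverness, is the bookkeeping of residues modulo $2^k$: correctly tracking the sign $(-1)^k$ in $(-2)^{k-2}$, correctly identifying which of $m_k$ or $n_k$ equals $\alpha_k$ for each parity of $k$, and verifying that $Q(x) \equiv \sum s_i 2^i$ follows from \eqref{eq:Q_sol1} together with the definition of $Q$ (so that only $V_k(x)$ matters). One should also double-check the small cases $k=2,3$ separately, since the orbit analysis borrowed from the proof of \eqref{eq:Q_feq_2xp1} degenerates there (the ``$k-3$ odd iterates'' becomes vacuous or negative); a direct check that $Q(m_2)=Q(0)=0\equiv m_2$, $Q(n_2)=Q(2)\equiv 2 = n_2 \pmod 4$, and similarly for $k=3$, closes those base cases. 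Everything else is a short, deterministic computation with the parity vectors.
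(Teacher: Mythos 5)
Your argument is correct, but it takes a genuinely different route from the paper. The paper remarks that $Q$ induces a permutation modulo $2^k$ and therefore works with $Q^{-1}$: it plugs the binary expansions $m_k=0011\ldots1_2$ and $n_k=1011\ldots1_2$ into the closed-form inverse transform \eqref{eq:inverse_2adic2} and reduces modulo $2^k$ using $3^{-1}\equiv-1\pmod 4$, getting $Q^{-1}(m_k)\equiv-1+(-2)^{k-2}$ and $Q^{-1}(n_k)\equiv-1-(-2)^{k-2}$. You instead compute $Q$ forward: you reuse the orbit analysis from the proof of \eqref{eq:Q_feq_2xp1} to get $V_k(\alpha_k)=(1,\ldots,1,0,1)$, hence $Q(\alpha_k)\equiv 2^{k-2}-1+2^{k-1}=n_k\pmod{2^k}$, and you handle the companion residue $\beta_k=\alpha_k+2^{k-1}$ via the Everett--Terras fact (congruence mod $2^{k-1}$ gives the same parity vector of length $k-1$) together with bijectivity of $V_k$ on residues mod $2^k$, which forces the last bit to flip and yields $Q(\beta_k)\equiv 2^{k-2}-1=m_k$. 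Since $\alpha_k=m_k$ for $k$ odd and $\alpha_k=n_k$ for $k$ even (Definition \ref{def:IJ}), this is exactly \eqref{eq:mnk_even}--\eqref{eq:mnk_odd}. The paper's route is a two-line computation once Corollary \ref{cor:inverse_2adic} is available and requires no orbit analysis; yours avoids inverting $3$ altogether, works directly with $Q$ (no detour through the permutation remark), and makes the link with the proof of Theorem \ref{th:Q_feq} explicit, at the price of the complementation argument and the separate checks at $k=2,3$ (which you rightly include, though the pattern $(1,\ldots,1,0,1)$ in fact already covers those cases with an empty run of ones).

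One bookkeeping correction in the step you yourself flagged: the stated identity $n_k\equiv-1+2^{k-2}\pmod{2^k}$ is wrong, since $-1+2^{k-2}=m_k$, while $n_k\equiv-1-2^{k-2}\pmod{2^k}$. This does not affect your conclusions, because the identifications you actually use are the direct ones $2^k-1-2^{k-2}=3\cdot2^{k-2}-1=n_k$ and $2^{k-1}-1-2^{k-2}=m_k$; just remove the erroneous intermediate formula when writing this up.
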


\begin{proof}The function $Q$ induces a permutation on $\mathbb{Z}/2^k\mathbb{Z}$. Thus, we can reason on $Q^{-1}$ instead of $Q$.

Let us write, first, the binary representations 
$$ m_k = \underbrace{0011\ldots1_{2}}_{k} \quad \text{and} \quad n_k = \underbrace{1011\ldots1_{2}}_{k}.$$ 
Applying formula \eqref{eq:inverse_2adic2} from Corollary \ref{cor:inverse_2adic}, we get
\begin{align*}
Q^{-1}(m_k) & \equiv -1 - 2^{k-2}3^{2-k} - 2^{k-1}3^{2-k} \pmod{2^k}\\
& \equiv -1 - 2^{k-2}3^{1-k} \pmod{2^k} \\
& \equiv -1 + (-2)^{k-2} \pmod{2^k}
\end{align*}
since $\frac13 \equiv -1 \pmod{4}$, and similarly,
\begin{align*}
Q^{-1}(n_k) & \equiv -1 - 2^{k-2}3^{2-k} \pmod{2^k} \\
& \equiv -1 - (-2)^{k-2} \pmod{2^k}.
\end{align*}
The properties \eqref{eq:mnk_even} and \eqref{eq:mnk_odd} follow by considering the parity of $k$.
\end{proof}

\begin{corollary}\label{cor:ball_to_interval}
For every $k \geq 2$,
$$ Y\left( B(\alpha_k,2^{-k})\right) = J_k \quad \text{and} \quad Y\left( B(2\alpha_k+1,2^{-k-1})\right) = I_{k+1},$$
where $Y=M \circ Q$ and $B(\alpha_k,2^{-k})$ is the closed ball with center $\alpha_k$ and radius $2^{-k}$ in $\mathbb Z_{2}$ (see Definitions  \ref{def:ball} in \S\ref{sub:erg}, and \ref{def:M} in \S\ref{sub:embed}).
\end{corollary}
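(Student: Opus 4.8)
The plan is to read off the image of each ball directly from the binary representation of its center, using the definition of $M$ together with Lemma~\ref{lem:mk_nk}. First I would record that, by Definition~\ref{def:M}, $M$ sends a $2$-adic ball of radius $2^{-k}$ onto a real interval of length $2^{1-k}$, and in particular $M\bigl(B(y,2^{-k})\bigr) = \bigl[M(y \bmod 2^k),\, M(y \bmod 2^k) + 2^{1-k}\bigr]$, since the $k$ low-order bits of any element of the ball are those of $y \bmod 2^k$ and the remaining bits range over all possibilities, contributing exactly the interval $[0, 2^{1-k}]$ to the $M$-value. So I only need to identify $Q(\alpha_k) \bmod 2^k$ and $Q(2\alpha_k+1) \bmod 2^{k+1}$.

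For the first identity: $\alpha_k \equiv m_k \pmod{2^k}$ when $k$ is odd and $\alpha_k \equiv n_k \pmod{2^k}$ when $k$ is even (Definition~\ref{def:IJ}). By the solenoidal property \eqref{eq:Q_sol1}, $Q(\alpha_k) \equiv Q(m_k)$ or $Q(n_k) \pmod{2^k}$ accordingly. Lemma~\ref{lem:mk_nk} then gives $Q(\alpha_k) \equiv n_k \pmod{2^k}$ for $k$ odd (from \eqref{eq:mnk_odd}) and $Q(\alpha_k) \equiv n_k \pmod{2^k}$ for $k$ even (from \eqref{eq:mnk_even}) --- in both cases $Q(\alpha_k) \equiv n_k \pmod{2^k}$. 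Hence $Y\bigl(B(\alpha_k,2^{-k})\bigr) = M\bigl(B(n_k,2^{-k})\bigr) = \bigl[M(n_k), M(n_k) + 2^{1-k}\bigr] = \bigl[M(n_k), M(m_{k+1})\bigr] = J_k$, where the penultimate equality is just the arithmetic $M(n_k) + 2^{1-k} = (2 - 3\cdot2^{1-k}) + 2^{1-k} = 2 - 2^{2-k} = M(m_{k+1})$ already recorded in Definition~\ref{def:IJ}.

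For the second identity: apply the functional equation \eqref{eq:Q_feq_2xp1}, valid precisely for $x \equiv -1-(-2)^{k-2} \pmod{2^k}$, i.e.\ for $x \equiv \alpha_k \pmod{2^k}$, so it applies with $x = \alpha_k$. It yields $Q(2\alpha_k+1) = 2Q(\alpha_k) - 2^k + 1$, whence $Q(2\alpha_k+1) \equiv 2Q(\alpha_k) + 1 \pmod{2^{k+1}}$. Using $Q(\alpha_k) \equiv n_k \pmod{2^k}$ from the previous step, $2Q(\alpha_k)+1 \equiv 2n_k+1 \pmod{2^{k+1}}$, and $2n_k+1 = 2(3\cdot2^{k-2}-1)+1 = 3\cdot2^{k-1}-1 = n_{k+1} - \text{(correction)}$; I should double-check which of $m_{k+1}, n_{k+1}$ this equals modulo $2^{k+1}$ --- in fact $3\cdot 2^{k-1}-1 = n_{k+1}$ only if $n_{k+1} = 3\cdot2^{(k+1)-2}-1$, which indeed holds, but the claimed value is $I_{k+1} = [M(m_{k+1}), M(n_{k+1})]$, so I expect instead $2\alpha_k+1 \equiv m_{k+1} \pmod{2^{k+1}}$ via the alternating parity bookkeeping and $Q(2\alpha_k+1) \equiv $ the center whose $M$-image is the left endpoint of $I_{k+1}$. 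The main obstacle is precisely this endpoint bookkeeping: one must track carefully, as $k$ increments, how the alternation $\alpha_k \in \{m_k, n_k\}$ interacts with the doubling $x \mapsto 2x+1$ and with the $-2^k+1$ term in \eqref{eq:Q_feq_2xp1}, to confirm the image is $I_{k+1}$ and not $J_{k+1}$. Once the correct center modulo $2^{k+1}$ is pinned down, the conclusion $Y\bigl(B(2\alpha_k+1,2^{-k-1})\bigr) = I_{k+1}$ is immediate from the length-$2^{-k}$ interval formula for $M$ and the closed-form endpoints in Definition~\ref{def:IJ}.
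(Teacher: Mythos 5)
Your treatment of the first identity is essentially the paper's argument: identify $Q(\alpha_k)\equiv n_k \pmod{2^k}$ from Lemma \ref{lem:mk_nk} in both parity cases and push the ball through $M$. One small caveat there: the non-expanding property \eqref{eq:Q_sol1} alone only gives $Q\bigl(B(\alpha_k,2^{-k})\bigr)\subseteq B(n_k,2^{-k})$, hence only an inclusion $Y\bigl(B(\alpha_k,2^{-k})\bigr)\subseteq J_k$; to get the claimed equality you need that $Q$ maps balls \emph{onto} balls, i.e.\ that $Q$ is a bijective isometry (property \eqref{eq:Q_sol2}), which is exactly what the paper invokes.

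The second identity, however, is left with a genuine gap, and the spot where you got stuck is an arithmetic slip. From \eqref{eq:Q_feq_2xp1} you correctly get $Q(2\alpha_k+1)=2Q(\alpha_k)-2^k+1$, but you then reduce this modulo $2^{k+1}$ by discarding the term $-2^k$; that is invalid, since $-2^k\equiv 2^k\not\equiv 0\pmod{2^{k+1}}$. Dropping it is what produced the residue $2n_k+1=n_{k+1}$ and the apparent contradiction with $I_{k+1}$ that you then defer as ``endpoint bookkeeping.'' Keeping the term resolves everything: $Q(2\alpha_k+1)\equiv 2n_k+1-2^k=n_{k+1}-2^k=3\cdot 2^{k-1}-1-2^k=2^{k-1}-1=m_{k+1}\pmod{2^{k+1}}$, which is precisely the congruence the paper states, and then $Q\bigl(B(2\alpha_k+1,2^{-k-1})\bigr)=B(m_{k+1},2^{-k-1})$ and $M\bigl(B(m_{k+1},2^{-k-1})\bigr)=\bigl[M(m_{k+1}),M(m_{k+1})+2^{-k}\bigr]=I_{k+1}$. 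Note also that your proposed repair, ``$2\alpha_k+1\equiv m_{k+1}\pmod{2^{k+1}}$,'' is only true for odd $k$ (for even $k$ one has $2\alpha_k+1=n_{k+1}$); the paper's route sidesteps the functional equation entirely by observing that $2\alpha_k+1$ equals $m_{k+1}$ or $n_{k+1}$ according to parity and applying Lemma \ref{lem:mk_nk} at index $k+1$, which gives $Q(2\alpha_k+1)\equiv m_{k+1}\pmod{2^{k+1}}$ in both cases. Either fix completes your argument; as written, the second half of the corollary is not proved.
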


\begin{proof}
From Lemma \ref{lem:mk_nk}, we have
$$ Q(\alpha_k) \equiv n_k \pmod{2^k} \quad \text{and} \quad Q(2 \alpha_k + 1) \equiv m_{k+1} \pmod{2^{k+1}},$$
whatever the parity of $k$. Since $Q$ is an isometry, it yields
$$Q(B(\alpha_k,2^{-k})) = B(n_k,2^{-k}) \quad \text{and} \quad  Q(B(2 \alpha_k + 1,2^{-k-1})) = B(m_{k+1},2^{-k-1}).$$
Hence the result.
\end{proof}

We now establish the existence of infinitely many affine relationships within $\Qset$ and give their analytic expressions.
\begin{theorem} \label{th:autosim}
The set $\Qset = (X,Y)(\mathbb Z_2)$ admits the self-affine relationships
\begin{equation} \label{eq:trans_even}
( X, Y )(2r) = \frac12 ( X, Y)(r) 
\end{equation}
for $r \in \mathbb Z_{2}$, and
\begin{equation} \label{eq:trans_odd}
( X, Y )(2r+1) = \frac12 ( X, Y)(r) + ( 1 , 1-2^{-k} )
\end{equation}
for $r \equiv \alpha_k \pmod{2^k}$ and $k \geq 2$, where $X=M$ and $Y=M \circ Q$ as in Definition \ref{def:M} (\S\ref{sub:embed}).
\end{theorem}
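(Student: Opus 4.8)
The plan is to prove the two self-affine relationships by unwinding the definitions $X = M$ and $Y = M\circ Q$ and applying the functional equations for $Q$ from Theorem~\ref{th:Q_feq}, together with the scaling behaviour of the Monna-type map $M$. The key preliminary fact I would isolate is how $M$ interacts with multiplication by $2$ and with the ``append a digit'' operations on $2$-adic integers: if $x = \sum_{k\geq 0} r_k 2^k$ then $2x = \sum_{k\geq 0} r_k 2^{k+1}$, so $M(2x) = \sum_{k\geq 0} r_k 2^{-(k+1)} = \tfrac12 M(x)$, and similarly $2x+1$ has binary expansion $1\,r_0\,r_1\cdots$, whence $M(2x+1) = 1 + \tfrac12 M(x)$. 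These two identities, $M(2x) = \tfrac12 M(x)$ and $M(2x+1) = 1 + \tfrac12 M(x)$, are the only properties of $M$ I need, and they follow immediately from Definition~\ref{def:M}; I would state them as a one-line observation at the start of the proof.

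\textbf{The even case.} For equation~\eqref{eq:trans_even}, the $X$-coordinate is immediate: $X(2r) = M(2r) = \tfrac12 M(r) = \tfrac12 X(r)$. For the $Y$-coordinate, use the functional equation~\eqref{eq:Q_feq_2x}, $Q(2r) = 2Q(r)$, valid for all $r \in \mathbb Z_2$; then $Y(2r) = M(Q(2r)) = M(2Q(r)) = \tfrac12 M(Q(r)) = \tfrac12 Y(r)$. Combining the two coordinates gives $(X,Y)(2r) = \tfrac12 (X,Y)(r)$ with no restriction on $r$, as claimed.

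\textbf{The odd case.} For equation~\eqref{eq:trans_odd}, again the $X$-coordinate is direct: $X(2r+1) = M(2r+1) = 1 + \tfrac12 M(r) = 1 + \tfrac12 X(r)$, which matches the first component $\tfrac12 X(r) + 1$ of the right-hand side. For the $Y$-coordinate I invoke~\eqref{eq:Q_feq_2xp1}: since $r \equiv \alpha_k \equiv -1 - (-2)^{k-2} \pmod{2^k}$ with $k \geq 2$, we have $Q(2r+1) = 2Q(r) - 2^k + 1$. Now $Q(r)$ is a $2$-adic integer; write $Q(r) = \sum_{i\geq 0} q_i 2^i$. The quantity $2Q(r) - 2^k + 1 = 1 + 2Q(r) - 2^k$ needs to be recognised as a $2$-adic integer whose first $k$ binary digits are forced, so that $M$ of it splits cleanly. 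Concretely, $2Q(r)+1$ has binary expansion $1\,q_0\,q_1\cdots$, so $M(2Q(r)+1) = 1 + \tfrac12 M(Q(r)) = 1 + \tfrac12 Y(r)$; subtracting $2^k$ flips bits from position $k$ onward in a way that only changes $M$ by a fixed amount \emph{provided} the low-order digits are as expected — this is where I must be careful. The cleanest route is to observe, via Corollary~\ref{cor:ball_to_interval} or directly via Lemma~\ref{lem:mk_nk}, that $Q(2r+1) \equiv m_{k+1} \pmod{2^{k+1}}$, so $Q(2r+1)$ lies in the ball $B(m_{k+1}, 2^{-k-1})$, and $M$ sends that ball onto the interval $I_{k+1} = [\,2 - 2^{3-k},\, 2 - 3\cdot 2^{1-k}\,]$ of length $2^{-k}$; matching endpoints against $\tfrac12 Y(r) + (1 - 2^{-k})$ as $Y(r)$ ranges over $[0,2]\cap(\text{appropriate sub-range})$ then pins down the additive constant as exactly $1 - 2^{-k}$.

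\textbf{Main obstacle.} The routine part is the $X$-coordinate and the even case; the only real subtlety is verifying that $M\bigl(2Q(r) - 2^k + 1\bigr) = \tfrac12 M(Q(r)) + 1 - 2^{-k}$ \emph{uniformly} over the allowed $r$, i.e.\ that the ``borrow'' produced by subtracting $2^k$ from $2Q(r)+1$ affects only the digit at position $k$ and not lower ones. I expect to handle this by reducing mod $2^{k+1}$ using Lemma~\ref{lem:mk_nk} (which controls $Q(\alpha_k)$ and hence $Q(2\alpha_k+1)$ modulo the relevant power of $2$), thereby showing the first $k+1$ binary digits of $Q(2r+1)$ are exactly those of $m_{k+1} = \underbrace{00}\,\underbrace{11\cdots1}$-type pattern shifted appropriately; once those digits are fixed, $M$ decomposes additively and the constant $1 - 2^{-k}$ drops out of a short computation with the geometric-series value $M(m_{k+1})$ or $M(n_{k+1})$. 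I would present this as: first reduce to checking digits mod $2^{k+1}$; then quote Lemma~\ref{lem:mk_nk}/Corollary~\ref{cor:ball_to_interval}; then finish with the $M$-additivity identity.
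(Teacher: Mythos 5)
Your proposal follows essentially the same route as the paper: the even case and both $X$-coordinates via $M(2x)=\tfrac12 M(x)$ and $M(2x+1)=1+\tfrac12 M(x)$, and the odd $Y$-coordinate via the functional equation \eqref{eq:Q_feq_2xp1} combined with Lemma \ref{lem:mk_nk}, which forces the digit of $Q(r)$ at position $k-1$ to equal $1$, so that subtracting $2^k$ from $2Q(r)+1$ merely clears the digit at position $k$ (borrows could only threaten higher digits, not lower ones) and $M$ splits additively to give the constant $1-2^{-k}$. One caution: the ``matching endpoints'' variant you float would only confine both sides to a common interval of length $2^{-k}$ and cannot by itself yield the pointwise identity, but the digit/mod-$2^{k+1}$ argument you actually settle on is exactly the paper's proof.
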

\begin{proof}[Proof of \eqref{eq:trans_even}]
For all 2-adic integers $r$, $$M(2r) = \frac12 M(r)$$ and, by the functional equation \eqref{eq:Q_feq_2x}, $$M(Q(2r)) = M(2Q(r)) = \frac12 M(Q(r)).$$
\end{proof}

\begin{proof}[Proof of \eqref{eq:trans_odd}]
Let $r \equiv \alpha_k \pmod{2^k}$. It is easily seen that $$M(2r+1) = 1 + \frac12 M(r).$$
Now, recall the functional equation \eqref{eq:Q_feq_2xp1}:
$$Q(2r+1) = 2Q(r) + 1 - 2^k.$$
Lemma \ref{lem:mk_nk} gives $Q(r) \equiv n_k \pmod{2^k}$, yielding the 2-adic expansion
$$ Q(r) = 1 + 2 + 2^2 + \ldots + 2^{k-3} + 2^{k-1} + \ldots.$$
Hence, we obtain
$$M(Q(2r+1)) = 1 + \frac12 M(Q(r)) - 2^{-k}.$$
\end{proof}

{\renewcommand{\arraystretch}{1.5}
\begin{table} 
\caption{Pairs of boxes covering two parts of $\Qset$ that coincide modulo an affine transformation with scaling factor $\frac{1}{2}$. Note that $k \geq 2$.}
\label{tab:squares}
\begin{center}
\begin{tabular}{|c|c|c|c|}
\hline
Parity &  Box 1  &  Box 2 & Affine transformation  \\
 \hline
 \hline
- & $[0, 2]^2$ & $[0, 1]^2$ & $(x,y) \longmapsto \left( \frac{x}{2}, \frac{y}{2} \right)$ \\
$k$ even & $J_{k}^{2}$ & $J_{k+1} \times I_{k+1}$ & $(x,y) \longmapsto \left( 1+\frac{x}{2}, 1+\frac{y}{2}-2^{-k} \right)$ \\
$k$ odd & $I_{k} \times J_{k}$ & $I_{k+1}^{2}$ & $(x,y) \longmapsto \left( 1+\frac{x}{2}, 1+\frac{y}{2}-2^{-k} \right)$ \\
 \hline
\end{tabular}
\end{center}
\end{table}
}

\begin{figure}
\centering
\includegraphics[scale=0.6]{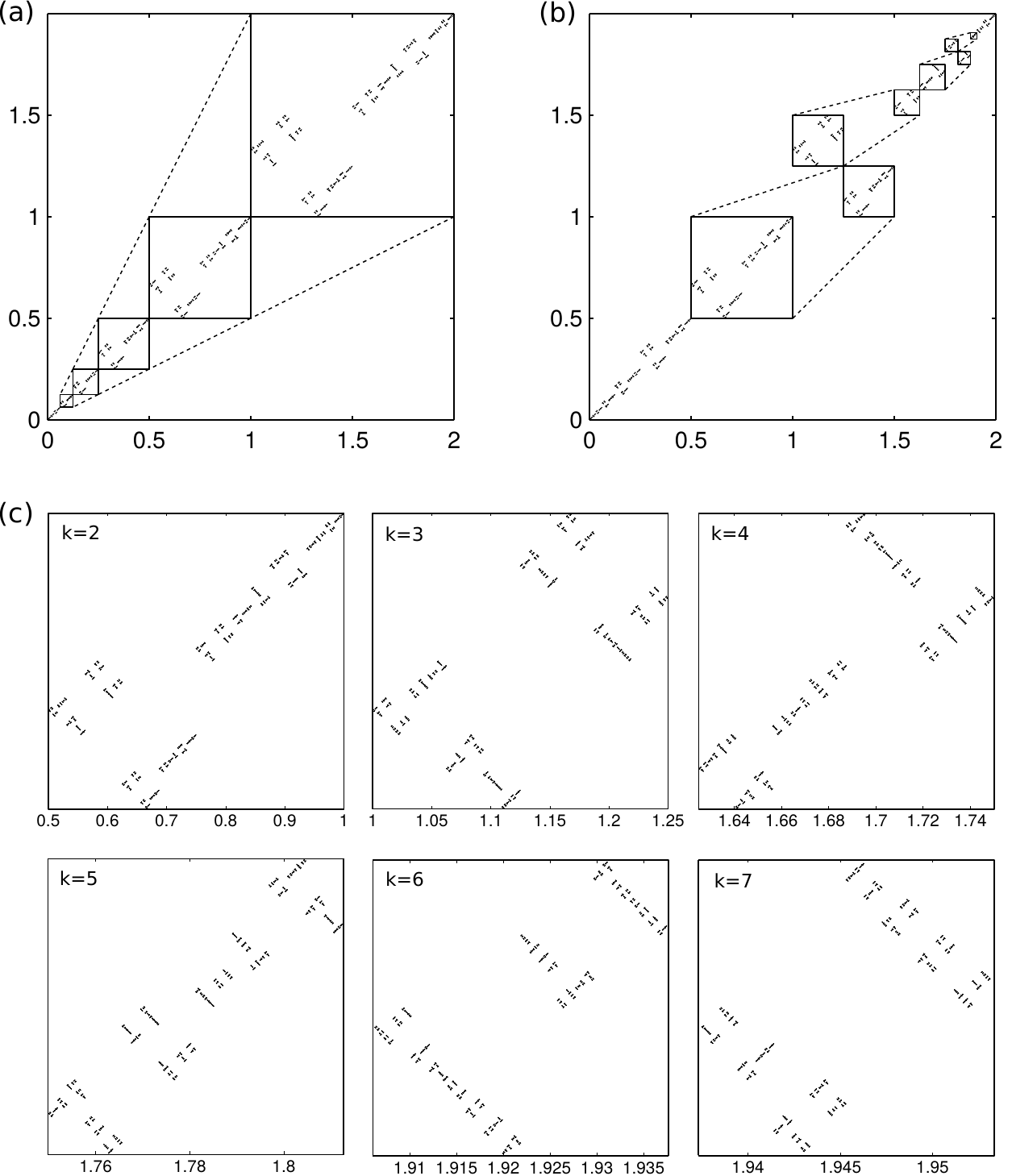}
\caption{\textbf{(a-b)} Identical parts of $\Qset$ through the affine transformations \eqref{eq:trans_even} and \eqref{eq:trans_odd} in (a) and (b) respectively. \textbf{(c)} Enlarged parts of $\Qset$ delimited by some of the boxes in (b), namely, $J_{k}^{2}$ for $k$ even and $I_{k} \times J_{k}$ for $k$ odd, with $2 \leq k \leq 7$.}
\label{fig:QR_squares}
\end{figure}

It follows from Theorem \ref{th:autosim} together with Corollary \ref{cor:ball_to_interval} that some parts of the $3x+1$ set described in Table \ref{tab:squares} are identical through the affine transformations represented on Figures \ref{fig:QR_squares}a and \ref{fig:QR_squares}b. The  first line of Table \ref{tab:squares} is linked to the autoconjugacy \eqref{eq:Q_feq_2x}. The fact that the boxes $[0, 2]^2$ and $[0, 1]^2$ are nested leads to an infinite descent on the ``even" side of $\Qset$ towards the origin. On the other hand, the equation \eqref{eq:trans_odd} applies only for odd values of the parameter $r$, except when $k=2$. From the corresponding pairs of boxes in Table \ref{tab:squares}, we obtain a covering of the ``odd'' side of $\Qset$, which takes the form in Figure \ref{fig:QR_squares}b of an infinite ``cascade'' along the diagonal $\Delta$.

Other functional equations like \eqref{eq:Qinv_feq_2xp1} and \eqref{eq:Q_feq_3xp1} do not imply self-similarity on our plane representation in Figure \ref{fig:QR}b, due to the value 3 not being a power of 2.

On Figure \ref{fig:QR_squares}c, we show enlarged parts of $\Qset$, mainly from the ``odd" side, that are delimited by squares of side-length $2^{1-k}$ for $2\leq k \leq 7$. Putting together all the affine similarities, it yields that the content of the first square $J_{2}^{2}$ is made entirely of small copies of $J_{k}^{2}$ for even $k \geq 4$, and  $I_{k} \times J_{k}$ for odd $k \geq 3$, plus an extra point at $\left( \frac{1}{2}, \frac{1}{2}  \right) $. It is a puzzling question whether every square on Figure \ref{fig:QR_squares}c is also made of small pieces taken elsewhere within the odd side of $\Qset$ and outside $J_{3} \times I_{3}$.

Nonetheless, one observes some relative diversity of patterns. Unlike for the Cantor ternary set, there seems to be no simple geometric scheme able to reproduce $\Qset$, in the sense that more and more calculations are required for refining the shape of each pattern.

Finally, the unveiling of self-similarity at all scales raises the question of the Hausdorff dimension, to which the following theorem answers without much difficulty.

\begin{theorem}
The set $\Qset$ has Hausdorff dimension 1.
\end{theorem}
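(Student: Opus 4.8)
The plan is to prove the two inequalities $\dim_H(\Qset) \leq 1$ and $\dim_H(\Qset) \geq 1$ separately. The lower bound is immediate: the map $X = M$ is a continuous surjection from $\mathbb{Z}_2$ onto $[0,2]$, and in fact the projection of $\Qset$ onto the first coordinate is all of $[0,2]$, which has Hausdorff dimension $1$. Since orthogonal projection is $1$-Lipschitz, it cannot increase Hausdorff dimension, so $\dim_H(\Qset) \geq \dim_H([0,2]) = 1$.

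For the upper bound, the natural tool is the covering from Lemma \ref{lem:Qset_boxes}: for each $k$, the set $\Qset$ is contained in a union of $2^k$ axis-parallel squares of side length $2^{1-k}$, hence of diameter $2^{1-k}\sqrt{2}$. For any $s > 1$, the corresponding $s$-dimensional Hausdorff premeasure is bounded by
$$ \sum_{n=0}^{2^k-1} \left( 2^{1-k}\sqrt{2} \right)^s = 2^k \cdot 2^{(1-k)s} \cdot 2^{s/2} = 2^{s/2+s} \cdot 2^{k(1-s)},$$
which tends to $0$ as $k \to \infty$ since $1 - s < 0$. Therefore $\mathcal{H}^s(\Qset) = 0$ for every $s > 1$, which gives $\dim_H(\Qset) \leq 1$. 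Combining the two bounds yields $\dim_H(\Qset) = 1$.

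I expect no serious obstacle here: the argument is a routine box-counting estimate once Lemma \ref{lem:Qset_boxes} is in hand, and the only mild subtlety is remembering that the Hausdorff dimension bound requires controlling the diameters (not areas) of the covering sets and letting the scale go to zero. One could alternatively phrase the upper bound via the upper box-counting dimension, noting that $N(\varepsilon) \leq 2^k$ when $\varepsilon \approx 2^{1-k}$ gives $\overline{\dim}_B(\Qset) \leq 1$, and then use $\dim_H \leq \overline{\dim}_B$; but the direct premeasure computation above is self-contained and needs no extra machinery. It is perhaps worth remarking that this shows $\Qset$, despite being visually intricate and self-similar at all scales, is not genuinely fractal in the dimension sense — the self-similarities of Theorem \ref{th:autosim} all have contraction ratio exactly $\tfrac12$ and the number of pieces grows only linearly in the inverse scale, so no dimension inflation occurs.
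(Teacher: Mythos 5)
Your proposal is correct and follows essentially the same route as the paper: the lower bound comes from $\Qset$ projecting onto the full interval $[0,2]$, and the upper bound from the covering of Lemma \ref{lem:Qset_boxes} by $2^k$ boxes of side $2^{1-k}$; the paper phrases the upper bound via the box-counting dimension while you compute the $s$-dimensional premeasure directly, which is only a cosmetic difference (and you note the box-counting variant yourself).
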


\begin{proof}
First, observe that the Hausdorff dimension of $\Qset$ is at least 1, as it contains at least one point for each abscissa taken in the interval $[0,2]$.

For all $k \geq 0$, we obtain from Lemma \ref{lem:Qset_boxes} (\S\ref{sub:embed}) a covering of $\Qset$ made of $\nu_k = 2^k$ boxes of side-length $l_k = 2^{1-k}$. The number of boxes is minimal because the number of intervals of length $l_k$ required to cover $[0,2]$ is at least $2^k$.

Therefore the ``box-counting" dimension of $\Qset$ is equal to
$$\lim_{k \rightarrow \infty}  \frac{\log \nu_k}{\log\left( \frac{1}{l_k}\right) } = \lim_{k \rightarrow \infty}  \frac{k}{k-1} = 1,$$
which is an upper bound of its Hausdorff dimension. 
\end{proof}

\section*{Acknowledgements}
I am indebted to the anonymous referee for his valuable comments that helped to improve the exposition of the $3x+1$ set.

\textsc{Institut de Physique du Globe de Paris, France}

{\it E-mail: }{\tt rozier@ipgp.fr}

\end{document}